\documentclass[9pt, shortpaper,twoside,web]{ieeecolor}

\usepackage{generic}
\usepackage{cite}
\usepackage{amsmath,amssymb,amsfonts}
\usepackage{algorithmic}
\usepackage{graphicx}
\usepackage{algorithm,algorithmic}
\usepackage{hyperref}
\hypersetup{hidelinks=true}
\usepackage{textcomp}

\usepackage{tikz}
\usepackage[caption=false]{subfig}

\let\oldlabelindent\labelindent  
\let\labelindent\relax  
\usepackage{enumitem}  
\let\labelindent\oldlabelindent

\newtheorem{definition}{Definition}
\newtheorem{remark}{Remark}
\newtheorem{theorem}{Theorem}
\newtheorem{lemma}{Lemma}

\newtheorem{assumption}{Assumption}

\newtheorem{example}{Example}

\DeclareMathOperator{\sos}{SOS}
\DeclareMathOperator{\vecrz}{vec}

\def\BibTeX{{\rm B\kern-.05em{\sc i\kern-.025em b}\kern-.08em
    T\kern-.1667em\lower.7ex\hbox{E}\kern-.125emX}}
\markboth{\hskip25pc IEEE TRANSACTIONS AND JOURNALS TEMPLATE}
{Author \MakeLowercase{\textit{et al.}}: Title}
\begin{document}
\title{Feedback Stabilization of Polynomial Systems:\\From Model-based to Data-driven Methods}
\author{Huayuan Huang, M. Kanat Camlibel, \IEEEmembership{Senior Member, IEEE}, Raffaella Carloni, \IEEEmembership{Senior Member, IEEE},\\and Henk J. van Waarde, \IEEEmembership{Member, IEEE}
\thanks{This work was supported by the China Scholarship Council 202106340040. Henk J. van
Waarde acknowledges financial support by the Dutch Research Council (NWO) under the Talent Programme Veni Agreement (VI.Veni.22.335).}
\thanks{Huayuan Huang, M. Kanat Camlibel, Raffaella Carloni and Henk J. van Waarde are with the Bernoulli Institute for Mathematics, Computer Science, and Artificial Intelligence, University of Groningen, 9747 AG Groningen, The Netherlands (email: huayuan.huang@rug.nl; m.k.camlibel@rug.nl; r.carloni@rug.nl; h.j.van.waarde@rug.nl)}}

\maketitle

\begin{abstract}
In this study, we propose new global stabilization approaches for a class of polynomial systems in both model-based and data-driven settings. The existing model-based approach guarantees global asymptotic stability of the closed-loop system only when the Lyapunov function is radially unbounded, which limits its applicability. To overcome this limitation, we develop a new global stabilization approach that allows a broader class of Lyapunov function candidates. Furthermore, we extend this approach to the data-driven setting, considering Lyapunov function candidates with the same functional structure. Using data corrupted by bounded noise, we derive conditions for constructing globally stabilizing controllers for unknown polynomial systems. Beyond handling noise, the proposed data-driven approach can be readily adapted to incorporate further prior knowledge of system parameters to reduce conservatism. In both approaches, sum-of-squares relaxation is used to ensure computational tractability of the involved conditions.
\end{abstract}

\begin{IEEEkeywords}
Data-driven control, nonlinear control, polynomial systems, sum of squares. 
\end{IEEEkeywords}

\section{Introduction}
The control of nonlinear systems is a challenging topic within control theory \cite{willems1972dissipative, CLF, ISSsontag, khalil2002nonlinear, DDCVRFT, Originalmiddle, modelfree, pHAjran, DDConlinedirect}, where key methods include stabilization \cite{CLF, Originalmiddle}, output regulation \cite{outputregulationoverview}, and optimal control \cite{adaptiveoptimal}, among others. Stabilization plays a fundamental role, as it serves as the foundation for many other control tasks \cite{ISSsontag, Originalmiddle, outputregulationoverview}. Existing stabilization methods can be roughly classified into two categories: model-based methods, which work with a model of the system dynamics, and direct data-driven methods, which design controllers for unknown systems directly from data. This study contributes to both categories by developing stabilization approaches in model-based and data-driven settings.

In the model-based setting, a variety of stabilization methods has been developed. Some approaches, such as feedback linearization \cite{isidori1985nonlinear}, are based on Lyapunov functions and are effective for systems with specific structures. A more systematic approach for constructing stabilizing controllers is proposed in \cite{CLF}, which uses control-Lyapunov functions. Another group of approaches, employing storage functions \cite{pHAjran}, offers a physics-based framework for controller design, and has demonstrated effectiveness in practical applications \cite{passivitybasedRobotics}. Nevertheless, these approaches share a common drawback: the stabilizers and associated functions are often constructed separately. To address this problem, a density function-based method is proposed in \cite{DensityConvex}, which ensures that the trajectories of the closed-loop system converge to the origin for \textit{almost all} initial states, however without guaranteeing asymptotic stability in the sense of Lyapunov \cite{DensityDual}. Another method tailored to input-affine polynomial systems is introduced in \cite{Originalmiddle}. Representing the system in a so-called linear-like form, the authors design controllers by imposing that a certain state-dependent matrix is negative definite. Using sum-of-squares (SOS) relaxation \cite{parriloSOS}, this approach enables the joint search for controllers and Lyapunov function candidates via linear matrix inequalities. However, global asymptotic stability of the closed-loop system is guaranteed only when the decision variable associated with the Lyapunov function candidates is a constant matrix. 

Model-based methods are useful, but accurate system models are often unavailable in practice. One possibility to address this is to identify the model from data, though this can be challenging, especially when the data are not informative to unique system identification~\cite{DDCInformativity}. As a result, direct data-driven approaches have been developed to design controllers without relying on a system model. Among these, some methods generate controllers that are updated online \cite{modelfree, DDConlinedirect,iPID}, while others construct controllers using offline data \cite{DDCmiddleTAC, DDCDensity}. Inspired by Willems’ fundamental lemma \cite{DDCJWilliems}, several recent works have contributed to the latter category. In \cite{DDCFormulas}, the authors extend a stabilization approach for linear systems to nonlinear cases by assuming that the dynamics of the nonlinear system can be locally represented by a linearized model. While effective in a local region, this approach does not guarantee global behavior. A global stabilization approach, based on the model-based results in \cite{Originalmiddle}, is proposed in \cite{DDCmiddleTAC}. The authors derive data-based linear matrix inequalities to jointly construct stabilizers and Lyapunov functions for input-affine polynomial systems. However, since the unknown parameters are formulated in a matrix form, this approach cannot incorporate additional prior knowledge of system parameters, which may lead to conservatism in practical applications. Another different strategy based on density functions is used in \cite{DDCDensity, paper1density}. Nevertheless, to ensure global asymptotic stability, the Lyapunov function of the closed-loop system with respect to the obtained controller must be constructed in a separate step \cite{paper1density}. 

Despite much research, there is no general methodology for constructing stabilizers for nonlinear systems. In this respect, the control of polynomial systems has gained attention due to its computational tractability \cite{Originalmiddle, jarvis2003lyapunov}. Moreover, polynomial functions commonly arise in physical systems, particularly in mechanical systems \cite{controltrajectoriesSOS}. 

In this study, we focus on stabilization of input-affine polynomial systems, where the vector field and input matrix consist of linear combinations of given polynomials. We observe that, in certain physical systems such as robots with flexible joints \cite{passivitybasedRobotics}, the control input does not affect all derivatives of the state variables directly. Based on this observation, we incorporate the prior knowledge into our approaches. We derive new global stabilization approaches for polynomial systems in both model-based and data-driven settings. Our model-based method extends the approach in \cite{Originalmiddle}, allowing a broader class of Lyapunov function candidates. In the data-driven setting, we assume that the vector field of the system is an unknown linear combination of given polynomials. We formulate conditions that enable synthesis of stabilizing controllers directly from data, based on the new model-based results derived in this paper.

To summarize, our main contributions are the following:
\begin{enumerate}
    \item We derive new feedback stabilization methods for polynomial systems in both model-based and data-driven settings.
    \item In comparison to \cite{Originalmiddle}, our model-based approach guarantees the global asymptotic stability of the closed-loop system without requiring the Lyapunov function to be radially unbounded.
    \item Using data corrupted by bounded noise, we derive design conditions for globally stabilizing controllers for unknown polynomial systems. Our approach incorporates prior knowledge of system parameters, which is demonstrated to reduce conservatism in numerical examples.
\end{enumerate}

The remainder of this paper is organized as follows. In Section \ref{section A model-based stabilization approach}, we present a novel stabilization approach for given polynomial systems. We then extend this approach to the data-driven setting in Section \ref{section Data-driven stabilization}. In Section \ref{section simulation}, our data-driven approach is illustrated with three examples. Finally, we conclude this article in Section \ref{section conclusion}.

\textit{Notation:} For a set of indices $\alpha \subseteq \{1, 2, \dots, p\}$, we define $v_{\alpha}$ as the subvector of $v \in \mathbb{R}^p$ corresponding to the entries indexed by $\alpha$. For a matrix $Y = \begin{bmatrix}
	y_1 & y_2 & \cdots & y_q
\end{bmatrix}$, where $y_i \in \mathbb{R}^p$ for $i = 1,2, \dots, q $, the vectorization of $Y$ is defined as $\vecrz(Y) = \begin{bmatrix}
	y_1^{\top} & y_2^{\top} & \cdots & y_q^{\top}
\end{bmatrix}^{\top}$. We denote the diagonal matrix with $d_1,\dots,d_n \in \mathbb{R}$ on the diagonal by $\operatorname{diag}(d_1,d_2,\dots,d_n)$. The Kronecker product of two matrices $A\in \mathbb{R}^{n\times m}$ and $B\in \mathbb{R}^{p\times q}$ is denoted by $A \otimes B\in \mathbb{R}^{np\times mq}$. The space of real symmetric $n\times n$ matrices is denoted by $\mathbb{S}^n$. A matrix $A \in \mathbb{S}^n$ is called positive definite if $x^{\top}Ax>0$ for all $x\in \mathbb{R}^n \setminus \{0\}$ and positive semidefinite if $x^{\top}Ax \geq 0$ for all $x\in \mathbb{R}^n$. This is denoted by $A>0$ and $A \geq 0$, respectively. Negative definiteness and semidefiniteness are similarly defined and denoted by $A < 0$ and $A \leq 0$, respectively. 

The set of all multivariate polynomials with coefficients in $\mathbb{R}^{p\times q}$ in the variables $x_1,x_2,\dots,x_n$ is denoted by $\mathbb{R}^{p\times q}[x]$, where $x = \begin{bmatrix}
    x_1&x_2& \cdots &x_n
\end{bmatrix}^{\top}$. When $q=1$ we simply write $\mathbb{R}^{p}[x]$, and when $p=q=1$ we use the notation $\mathbb{R}[x]$. A multivariate polynomial $Q \in \mathbb{R}^{q\times q}[x]$ is called a SOS polynomial if there exists a $\hat{Q} \in \mathbb{R}^{p\times q}[x]$ such that $Q(x) = \hat{Q}^{\top}(x)\hat{Q}(x)$. The set of all $q\times q$ SOS polynomials is denoted by $\sos^q[x]$. For vectors $y \in \mathbb{R}^l$ and $z \in \mathbb{R}^m$, the set of all $q\times q$ SOS polynomials in variables $y$ and $z$ is denoted by $\sos^q[y,z]$. In addition, for a function $f : \mathbb{R}^n \rightarrow \mathbb{R}^m$, we define
$$
\frac{\partial f}{\partial x}(x) = \begin{bmatrix}
    \frac{\partial f}{\partial x_1}(x) & \frac{\partial f}{\partial x_2}(x) & \cdots & \frac{\partial f}{\partial x_n}(x)
\end{bmatrix}.
$$
\section{A novel model-based stabilization approach} \label{section A model-based stabilization approach}
Consider the polynomial input-affine system
\begin{equation} \label{poly sys}
    \begin{bmatrix}
        \dot{x}_1(t)\\
        \dot{x}_2(t)
    \end{bmatrix} = \begin{bmatrix}
        A_{1}\\
        A_{2}
    \end{bmatrix}F(x(t)) + \begin{bmatrix}
        0\\
        B_{2}
    \end{bmatrix}G(x(t))u(t),
\end{equation}
where $x_1(t) \in \mathbb{R}^{n_1}$, $x_2(t) \in \mathbb{R}^{n_2}$, $x(t)=\begin{bmatrix}
    x_1^{\top}(t) & x_2^{\top}(t)
\end{bmatrix}^{\top} \in \mathbb{R}^n$ is the state, and $u(t) \in  \mathbb{R}^m$ is the input. The parameter matrices are given by $A_1 \in \mathbb{R}^{n_1 \times f}$, $A_2 \in \mathbb{R}^{n_2 \times f}$ and $B_2 \in \mathbb{R}^{n_2 \times g}$. The functions $F \in \mathbb{R}^{f}[x]$ and $G \in \mathbb{R}^{g\times m}[x]$ represent polynomial mappings of the state. Note that the system \eqref{poly sys} is structured in the sense that the input $u$ only directly influences $\dot{x}_2$. This structure is motivated by the fact that in many physical systems, not all state derivatives are directly affected by the inputs. For example, in a mass-spring system, the state typically consists of the position and velocity of the mass, and the input force only affects the acceleration (i.e., the derivative of the velocity). The main idea of the paper is to exploit the structure of \eqref{poly sys} to develop dedicated control methods in both model-based and data-driven settings.

In the current section, we focus on model-based control, and our objective is to design a state feedback controller $u = K(x)$, such that the origin of the closed-loop system
\begin{equation} \label{closedloopsystem}
        \begin{bmatrix}
        \dot{x}_1(t)\\
        \dot{x}_2(t)
    \end{bmatrix} = \begin{bmatrix}
        A_{1}\\
        A_{2}
    \end{bmatrix}F(x(t)) + \begin{bmatrix}
        0\\
        B_{2}
    \end{bmatrix}G(x(t))K(x(t))
\end{equation}
is globally asymptotically stable.

\subsection{Previous work} \label{section Previous work}
The existing approach in \cite[Thm.~6]{Originalmiddle} defines the Lyapunov function candidate as
\begin{equation} \label{Vx}
    V(x) = Z^{\top}(x) P^{-1}(x_1) Z(x),
\end{equation}
where the matrix $P \in \mathbb{R}^{p \times p}[x_1]$ satisfies
\begin{equation} \label{Px1 pd}
    P(x_1) > 0\ \forall x_1 \in \mathbb{R}^{n_1},
\end{equation}
and $Z \in \mathbb{R}^p[x]$ is a vector of polynomials in $x$ satisfying $F(x)=H(x)Z(x)$ for some $H \in \mathbb{R}^{f \times p}[x]$. The following assumption on $Z$ is made in \cite{Originalmiddle} and is also adopted in the present paper.
\begin{assumption} \label{assumption Z0x0} 
    The vector $Z(x) = 0$ if and only if $x = 0$.
\end{assumption}

The controller is considered to be of the form 
\begin{equation} \label{Kx}
    K(x) := L(x) P^{-1}(x_1) Z(x),
\end{equation}
where $L \in \mathbb{R}^{m \times p}[x]$. Let $x_{1,j}$ denote the $j$-th entry of $x_1$, and $A_{1,j}$ the $j$-th row of $A_{1}$, where $j \in \{1,2,\dots,n_1\}$. Define
\begin{equation} \label{Mx}
    \begin{aligned}
        M(x) := & - \frac{\partial Z}{\partial x}(x) \begin{bmatrix}
            A_{1} & 0\\
            A_{2} & B_{2}
        \end{bmatrix}\begin{bmatrix}
            H(x)P(x_1) \\
            G(x)L(x)
        \end{bmatrix} \\
        & -\left( \frac{\partial Z}{\partial x}(x) \begin{bmatrix}
            A_{1} & 0\\
            A_{2} & B_{2}
        \end{bmatrix}\begin{bmatrix}
            H(x)P(x_1) \\
            G(x)L(x)
        \end{bmatrix}\right)^{\top}\\
        & + \sum_{i = 1}^{n_1}\frac{\partial P}{\partial x_{1,i}}(x_1) A_{1,i}H(x)Z(x).
    \end{aligned}
\end{equation}
Then, it can be verified that
\begin{equation} \label{theorem model based pVpx}
\begin{aligned}
    &\frac{\partial V}{\partial x}(x)\left(\begin{bmatrix}
        A_{1}\\
        A_{2}
    \end{bmatrix}F(x) + \begin{bmatrix}
        0\\
        B_{2}
    \end{bmatrix}G(x)K(x) \right)\\
    =&  - Z^{\top}(x)P^{-1} (x_1) M(x) P^{-1}(x_1)Z(x).
\end{aligned}  
\end{equation}
Now, the main idea of \cite{Originalmiddle} was to find $P$ and $L$ such that \eqref{Px1 pd} holds and
\begin{equation} \label{Qx pd}
    M(x) > 0 \ \forall x \in \mathbb{R}^{n} \setminus \{0\}.
\end{equation}
Indeed, in \cite{Originalmiddle}, it was shown that \eqref{Px1 pd} and \eqref{Qx pd} imply that the controller $K$ in \eqref{Kx} renders the origin asymptotically stable. Moreover, the origin is globally asymptotically stable provided that $P$ is constant and $Z$ is a vector of monomials. This is due to the fact that, in this case, $V$ is radially unbounded. In the case that $P$ is state-dependent, \cite{Originalmiddle} does not provide any guarantees on global asymptotic stability. A possible way to establish such guarantees is to identify conditions on $P$ under which $V$ is radially unbounded. However, it is not straightforward to impose these conditions via SOS programming. To overcome this problem, we derive a new stabilization approach that guarantees global asymptotic stability \emph{without} requiring $P$ to be constant and $V$ to be radially unbounded. 

\subsection{A new approach}
To present our approach, we first propose a new condition for global asymptotic stability. The main feature of this approach is that the Lyapunov function is not required to be radially unbounded.
\begin{lemma} \label{lemma GAS}
    Let $x = 0$ be an equilibrium point of the system 
    \begin{equation} \label{lemma GAS xfx}
        \dot{x} = f(x),
    \end{equation}
    where $f:\mathbb{R}^n \rightarrow \mathbb{R}^n$ is a continuous function. Suppose that there exist a continuously differentiable function $V :\mathbb{R}^n \rightarrow \mathbb{R}$ and constants $c,r>0$ such that 
    \begin{equation}\label{lemma GAS 1}
        V(0) = 0 \text{ and } V(x)>0 \ \forall x \neq 0,
    \end{equation}
    \begin{equation} \label{lemma GAS 2}
        \frac{\partial V}{\partial x}(x)f(x) < 0\quad \forall x \neq 0,
    \end{equation}
    \begin{equation}\label{lemma GAS 3}
        \frac{\partial V}{\partial x}(x)f(x) < -c\quad \forall x \text{ with } \|x\| \geq r.
    \end{equation}
    Then, $x=0$ is globally asymptotically stable.
\end{lemma}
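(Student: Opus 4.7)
The plan is to mimic the Barbashin--Krasovskii theorem, with condition (\ref{lemma GAS 3}) substituting for radial unboundedness of $V$. First, (\ref{lemma GAS 1}) and (\ref{lemma GAS 2}) are exactly the hypotheses of the classical Lyapunov stability theorem, so the origin is stable in the sense of Lyapunov: for every $\varepsilon>0$ there exists $\delta>0$ such that $\|x(0)\|<\delta$ implies $\|x(t)\|<\varepsilon$ for all $t\geq 0$. This step is textbook and I would cite rather than rederive it.

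The core of the proof is to upgrade local stability to global attractivity. Fix any initial state $x(0)$ with trajectory $x(\cdot)$. The crucial use of (\ref{lemma GAS 3}) is a finite-time capture argument: while $\|x(t)\|\geq r$ we have $\dot V(x(t))<-c$, so integrating together with $V\geq 0$ forces the trajectory to enter the closed ball $\overline{B}_r:=\{x:\|x\|\leq r\}$ within time $V(x(0))/c$. Combining this with $\dot V\leq 0$ inside $\overline{B}_r$ (by (\ref{lemma GAS 2})), the total Lebesgue measure of the set $\{t\geq 0:\|x(t)\|>r\}$ is bounded by $V(x(0))/c$, and every excursion outside $\overline{B}_r$ must terminate in a re-entry since the alternative drives $V$ below zero.

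To conclude $x(t)\to 0$, I would chain stability with a uniform-decrease argument inside $\overline{B}_r$. Given $\varepsilon>0$, pick $\delta>0$ from the stability step and choose $\eta>0$ small enough that $\{x:V(x)\leq\eta\}\subseteq\{x:\|x\|<\delta\}$, which is possible because $V$ is continuous with $V(0)=0$. On the compact set $\overline{B}_r\cap\{x:V(x)\geq\eta\}$, which does not contain the origin, (\ref{lemma GAS 2}) and continuity of $\dot V$ give $\dot V\leq-\mu$ for some $\mu>0$; every visit to this set drains $V$ at a uniform rate. Since the total time spent outside $\overline{B}_r$ is also finite, it follows that $V(x(t))\leq\eta$, and hence $\|x(t)\|<\varepsilon$, for all $t$ sufficiently large; letting $\varepsilon\downarrow 0$ yields $x(t)\to 0$.

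The main subtlety, absent from the classical Barbashin--Krasovskii proof, is that sublevel sets of $V$ are not necessarily compact, so one cannot pre-confine the trajectory to a compact positively invariant set. Condition (\ref{lemma GAS 3}) takes over the role of radial unboundedness by uniformly penalising any attempt of the state to drift away from $\overline{B}_r$; the accompanying bookkeeping---that every excursion outside $\overline{B}_r$ is paid for by a uniform decrease in $V$, and that such decreases cannot accumulate indefinitely because $V\geq 0$---is where the proof requires care.
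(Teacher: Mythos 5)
Your overall strategy is sound and close in spirit to the paper's proof (which likewise combines local asymptotic stability from \eqref{lemma GAS 1}--\eqref{lemma GAS 2} with a uniform negative bound on $\dot V$, obtained from \eqref{lemma GAS 3} outside a ball and from compactness inside it), but one step is genuinely wrong as written: the claim that, because $V$ is continuous with $V(0)=0$, one can choose $\eta>0$ with $\{x: V(x)\le\eta\}\subseteq\{x:\|x\|<\delta\}$. That inclusion is equivalent to $\inf_{\|x\|\ge\delta}V(x)>0$, which does \emph{not} follow from continuity and positive definiteness of $V$; it is precisely the property that can fail when $V$ is not radially unbounded, i.e., precisely the situation this lemma is designed to cover. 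For instance, $V(x)=\|x\|^2/(1+\|x\|^4)$ is continuous and positive definite, yet every sublevel set $\{x:V(x)\le\eta\}$ with $\eta>0$ is unbounded. You even observe in your final paragraph that sublevel sets of $V$ need not be compact, but this step silently assumes that small sublevel sets are contained in small balls. Consequently, ``$V(x(t))\le\eta$ for large $t$'' does not by itself yield ``$\|x(t)\|<\varepsilon$''.

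The gap is repairable within your own argument, because your time-budget bookkeeping delivers more than you use: it produces a finite time $t_0$ at which \emph{both} $x(t_0)\in\overline{B}_r$ \emph{and} $V(x(t_0))<\eta$ hold (the trajectory spends total time at most $V(x(0))/c$ outside $\overline{B}_r$ and at most $V(x(0))/\mu$ in $\overline{B}_r\cap\{x:V(x)\ge\eta\}$, so it must eventually lie in $\overline{B}_r\cap\{x:V(x)<\eta\}$). Assuming without loss of generality $\delta\le r$, set $\eta:=\tfrac{1}{2}\min\{V(x):\delta\le\|x\|\le r\}$, which is positive by compactness and \eqref{lemma GAS 1}. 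Then $x(t_0)\in\overline{B}_r$ together with $V(x(t_0))<\eta$ forces $\|x(t_0)\|<\delta$, and Lyapunov stability of the (time-invariant) system gives $\|x(t)\|<\varepsilon$ for all $t\ge t_0$. With this one correction your proof is complete, and it amounts to a direct, quantitative rendering of the paper's argument, which instead reasons by contradiction that every trajectory must enter the compact invariant set $\Omega(\omega,\rho)=\{x:\|x\|\le\omega,\ V(x)\le\rho\}$ supplied by the local stability theorem.
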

\begin{proof}
    We first define, for constants $\omega, \rho > 0$, the sets $\mathcal{B}(\omega) := \{x \in \mathbb{R}^n \mid ||x|| \leq \omega\}$ and $\Omega(\omega,\rho) := \{x \in \mathcal{B}(\omega) \mid V(x) \leq \rho\}$. 
    
    By \eqref{lemma GAS 1} and \eqref{lemma GAS 2}, it follows from Theorem 4.1 in \cite{khalil2002nonlinear} that the origin of the system \eqref{lemma GAS xfx} is asymptotically stable. Specifically, there exist constants $\omega, \rho > 0$ such that if $x(0) \in \Omega(\omega,\rho)$, then $x(t) \in \Omega(\omega,\rho)$ for all $t \geq 0$, and $x(t) \rightarrow 0$ as $t \rightarrow \infty$.
    
    To prove global asymptotic stability, it suffices to prove that for any initial state, there exists $T \geq 0$ such that $x(T) \in \Omega(\omega,\rho)$. Our strategy is to prove this by contradiction. Suppose that $x(t) \notin \Omega(\omega,\rho)$ for all $t \geq 0$. Since $V(x) = 0$ if and only if $x=0$, and $V(x)$ is continuous, it follows that there exists $\alpha > 0$ such that $\mathcal{B}(\alpha) \subset \Omega(\omega,\rho)$. It is clear that $x(t) \notin \mathcal{B}(\alpha)$ for all $t \geq 0$. We now distinguish between two cases. First, for $\alpha < r$, the continuous function $\frac{\partial V}{\partial x}(x)f(x)$ has a maximum over the compact set $\{x \in \mathbb{R}^n \mid \alpha \leq ||x|| \leq r \}$. Let $-\gamma$ be this maximum. Note that due to \eqref{lemma GAS 2}, $\gamma > 0$. Then, we have
    $$
    \frac{\partial V}{\partial x}(x(t))f(x(t)) \leq -\min\{c,\gamma\} < 0\ \forall  t \geq 0.
    $$
    It follows that 
    $$
    \begin{aligned}
        V(x(t)) &= V(x(0)) + \int_{0}^{t} \frac{\partial V}{\partial x}(x(\tau))f(x(\tau)) d\tau \\
        &\leq V(x(0)) - \min\{c,\gamma\} t.
    \end{aligned}
    $$
    For sufficiently large $t$, the right-hand side is negative. This contradicts \eqref{lemma GAS 1}. Subsequently, consider the case that $\alpha \geq r$. In this case, it follows by \eqref{lemma GAS 3} that
    $$
    \frac{\partial V}{\partial x}(x(t))f(x(t)) \leq -c \ \forall  t \geq 0.
    $$
    The contradiction is obtained in the same manner. Therefore, we conclude that, for any initial state $x(0)\in \mathbb{R}^n$, there exists $T \geq 0$ such that $x(T) \in \Omega(\omega,\rho)$. This proves the lemma.
\end{proof}

Compared to Theorem 4.2 in \cite{khalil2002nonlinear}, Lemma \ref{lemma GAS} actually provides an alternative condition to radial unboundedness of the Lyapunov function. To illustrate this, consider the system \eqref{lemma GAS xfx} with
$$
f(x) = \begin{bmatrix}
    (-x_1 - x_2)(1+x_1^2)^2\\
    x_1-x_2
\end{bmatrix},
$$
and the function 
$$
    V(x) = \frac{x_1^2}{1+x_1^2} + x_2^2.
$$
Clearly, $\frac{\partial V}{\partial x}(x)f(x) = -2x_1^2-2x_2^2$. Therefore, the assumptions of Lemma \ref{lemma GAS} are satisfied, and we conclude that the origin is globally asymptotically stable. However, note that the function $V$ is not radially unbounded. 

With Lemma \ref{lemma GAS}, we present the main result of this section, which can be viewed as an extended version of \cite[Thm.~6]{Originalmiddle}. In contrast to \cite{Originalmiddle}, we do not require a radially unbounded Lyapunov function. Furthermore, the entries of $Z$ are not necessarily monomials.
\begin{theorem} \label{theorem model based}
    For the system \eqref{poly sys}, suppose that the constants $\epsilon_1,c,r > 0$, polynomials $\epsilon_2,\epsilon_3\in\mathbb{R}[x]$, and polynomial matrices $P \in \mathbb{R}^{p \times p}[x_1]$ and $L\in\mathbb{R}^{m \times p}[x]$ satisfy 
    \begin{enumerate} [label=(\alph*)]
        \item \label{theorem model based 1} $\epsilon_2(x) > 0$ for all $x \neq 0$ and $\epsilon_3(x) > 0$ for all $x$,
        \item \label{theorem model based 2} $\epsilon_2(x)\epsilon_3^{-1}(x)Z^{\top}(x)Z(x) \geq c$ for all $x$ with $\|x\| \geq r$,
        \item \label{theorem model based 3} $P(x_1) - \epsilon_1I \in \sos^p[x_1]$,
        \item \label{theorem model based 4} $\begin{bmatrix}
            M(x) & \epsilon_2(x)P(x_1)\\
            \epsilon_2(x)P(x_1) & \epsilon_2(x)\epsilon_3(x)I
        \end{bmatrix} \in \sos^{2p}[x]$,  
    \end{enumerate}
    where $M(x)$ is defined as in \eqref{Mx}. Then, the controller \eqref{Kx} renders the origin globally asymptotically stable.
\end{theorem}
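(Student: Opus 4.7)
The strategy is to verify the three hypotheses of Lemma \ref{lemma GAS} for the closed-loop vector field with the Lyapunov candidate $V(x) = Z^{\top}(x)P^{-1}(x_1)Z(x)$ from \eqref{Vx} and the controller \eqref{Kx}. First I would observe that condition \ref{theorem model based 3} implies $P(x_1) \succeq \epsilon_1 I > 0$ for every $x_1 \in \mathbb{R}^{n_1}$, so $P^{-1}(x_1)$ is well-defined and smooth (via the cofactor formula). Consequently $V$ is continuously differentiable, and combined with Assumption \ref{assumption Z0x0} this yields $V(0)=0$ together with $V(x) > 0$ for all $x \neq 0$, giving hypothesis \eqref{lemma GAS 1}.

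The core of the proof is the strict negativity of $\frac{\partial V}{\partial x}(x)f_{\mathrm{cl}}(x)$, where $f_{\mathrm{cl}}$ denotes the closed-loop vector field. By identity \eqref{theorem model based pVpx}, this derivative equals $-Z^{\top}(x)P^{-1}(x_1)M(x)P^{-1}(x_1)Z(x)$, so it suffices to lower bound $M(x)$. The main step is a Schur complement applied to condition \ref{theorem model based 4}: the block matrix is SOS, hence positive semidefinite for every $x$, and by condition \ref{theorem model based 1} the (2,2) block $\epsilon_2(x)\epsilon_3(x)I$ is strictly positive for $x \neq 0$. The Schur complement then yields
\begin{equation*}
M(x) \;\geq\; \epsilon_2(x)\epsilon_3^{-1}(x)\,P(x_1)\,P(x_1) \quad \forall x \neq 0.
\end{equation*}
Sandwiching this bound between $P^{-1}(x_1)$ and then between $Z^{\top}(x)$ and $Z(x)$ gives
\begin{equation*}
Z^{\top}(x)P^{-1}(x_1)M(x)P^{-1}(x_1)Z(x) \;\geq\; \epsilon_2(x)\epsilon_3^{-1}(x)\,Z^{\top}(x)Z(x),
\end{equation*}
which is strictly positive for $x \neq 0$ by Assumption \ref{assumption Z0x0} together with condition \ref{theorem model based 1}, establishing hypothesis \eqref{lemma GAS 2}.

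Hypothesis \eqref{lemma GAS 3} follows at once from the same bound and condition \ref{theorem model based 2}: for every $x$ with $\|x\| \geq r$ one has $\frac{\partial V}{\partial x}(x)f_{\mathrm{cl}}(x) \leq -\epsilon_2(x)\epsilon_3^{-1}(x)Z^{\top}(x)Z(x) \leq -c$. An appeal to Lemma \ref{lemma GAS} then concludes that the origin of the closed-loop system \eqref{closedloopsystem} is globally asymptotically stable.

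The step I expect to need most care is the Schur complement manoeuvre. Condition \ref{theorem model based 4} is stated as an SOS (hence pointwise positive semidefinite) inequality rather than a pointwise positive-definite one, and the (2,2) block degenerates at $x=0$ wherever $\epsilon_2$ vanishes. I would therefore perform the complement only on $\mathbb{R}^n \setminus \{0\}$, which is precisely the region where hypothesis \eqref{lemma GAS 2} needs to be checked, while the origin is taken care of directly by $V(0)=0$. A minor bookkeeping point is to confirm that the derivative identity \eqref{theorem model based pVpx} remains valid when $P$ depends on $x_1$; this is where the summation term involving $\partial P/\partial x_{1,i}$ in the definition \eqref{Mx} of $M(x)$ enters, and the paper already asserts the identity.
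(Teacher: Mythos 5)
Your proposal is correct and follows essentially the same route as the paper: verify the three hypotheses of Lemma \ref{lemma GAS} for $V$ in \eqref{Vx}, using conditions \ref{theorem model based 3} and Assumption \ref{assumption Z0x0} for positive definiteness of $V$, the identity \eqref{theorem model based pVpx} together with a Schur complement on condition \ref{theorem model based 4} to obtain $M(x)\geq\epsilon_2(x)\epsilon_3^{-1}(x)P(x_1)P(x_1)$ and hence the bound \eqref{bridge to ddc}, and condition \ref{theorem model based 2} for the uniform decrease outside the ball of radius $r$. Your extra care about performing the Schur complement only on $\mathbb{R}^n\setminus\{0\}$ (where $\epsilon_2>0$) is a reasonable refinement of a point the paper glosses over, but it does not change the argument.
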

\begin{proof}
    To prove this theorem, we apply Lemma \ref{lemma GAS} with
    \begin{equation} \label{fx}
        f(x) = \begin{bmatrix}
            A_1\\
            A_2
        \end{bmatrix} F(x) + \begin{bmatrix}
            0\\B_2
        \end{bmatrix} G(x)K(x),
    \end{equation}
    and the function $V$ given in \eqref{Vx}. It follows from condition \ref{theorem model based 3} and Assumption \ref{assumption Z0x0} that \eqref{lemma GAS 1} holds. We apply a Schur complement argument to the matrix in condition \ref{theorem model based 4}, which, combined with condition \ref{theorem model based 1}, yields $M(x) \geq \epsilon_2(x)\epsilon_3^{-1}(x)P(x_1)P(x_1)$ for all $x$. Together with \eqref{theorem model based pVpx}, we obtain
    \begin{equation} \label{bridge to ddc}
        \frac{\partial V}{\partial x}(x)f(x) \leq -\epsilon_2(x)\epsilon_3^{-1}(x)Z^{\top}(x)Z(x)\  \forall x,
    \end{equation}
    which implies \eqref{lemma GAS 2}. Finally, by condition \ref{theorem model based 2}, \eqref{lemma GAS 3} holds. This proves the theorem.
\end{proof}

In the following, we illustrate Theorem \ref{theorem model based} with a numerical example (see~\cite{paper1density}).

\begin{example} \label{ex 1}
    In this example, we consider the system
    \begin{equation} \label{ex sys 1}
    \begin{aligned}
        \dot{x}_1 = x_2 - x_1^2,\quad  \dot{x}_2 = u.
    \end{aligned}
    \end{equation} 
    Let 
    $$
    F(x) = \begin{bmatrix}
        x_1^2 & x_2
    \end{bmatrix}^{\top}\text{ and } G(x) = 1.
    $$
    Then, 
    $$
    A_1 = \begin{bmatrix}
        -1 & 1
    \end{bmatrix},\ A_2 = \begin{bmatrix}
        0 & 0
    \end{bmatrix} \text{ and }B_2 = 1.
    $$
    In case $Z(x) = x$, it was shown in \cite{paper1density} that for any choice of $H(x)$, the matrix inequality \eqref{Qx pd} does not hold with a constant $P$. This implies that the approach in \cite{Originalmiddle} cannot yield a globally stabilizing controller.

    Motivated by this observation, we next apply Theorem \ref{theorem model based}. We choose $Z(x) = x$ and then
    $$
    H(x) = \begin{bmatrix}
        x_1 & 0\\
        0 & 1
    \end{bmatrix}.
    $$
    Furthermore, we select $\epsilon_1 = 0.1$, $\epsilon_2 = 0.01$ and $\epsilon_3(x) = 1 + x_1^2$, from which it follows that conditions \ref{theorem model based 1} and \ref{theorem model based 2} of Theorem \ref{theorem model based} hold. Moreover, it can be verified that conditions \ref{theorem model based 3} and \ref{theorem model based 4} of Theorem \ref{theorem model based} hold if 
    $$
    \begin{aligned}
        P(x_1) & = \begin{bmatrix}
            1 & x_1-0.5\\
            x_1-0.5 & 2(x_1-0.5)^2+3
        \end{bmatrix},\\
        L(x) & = \begin{bmatrix}
            -2+2x_1+x_2-2x_1^2 & -2-x_2+2x_1x_2-2x_1^3
        \end{bmatrix}.
    \end{aligned}
    $$
    With $P(x_1)$ and $L(x)$ in place, we calculate
    \begin{equation} \label{ex1 Vx}
        V(x) = Z^{\top}(x)P^{-1}(x_1)Z(x) = \frac{\eta(x)}{\det{P(x_1)}},
    \end{equation}
    \begin{equation} \label{ex1 Kx}
        K(x) = L(x)P^{-1}(x_1)Z(x) = \frac{\xi(x)}{\det{P(x_1)}},
    \end{equation}
    where
    $$
    \begin{aligned}
        \eta(x) = &\ 3.5x_1^2 + x_1x_2 + x_2^2 - 2x_1^3 - 2x_1^2x_2 + 2x_1^4 ,\\
        \xi(x) = & -8x_1-3x_2+13x_1^2+6x_1x_2-0.5x_2^2-15x_1^3 \\
         & -3x_1^2x_2+x_1x_2^2+7x_1^4-2x_1^5,
    \end{aligned}
    $$
    and
    $$
    \det{P(x_1)} = 3.25 - x_1 + x_1^2
    $$
    denotes the determinant of $P(x_1)$. This verifies global asymptotic stability of the closed-loop system with respect to the system \eqref{ex sys 1} and the controller $K(x)$. The phase portrait of the closed-loop system and the level sets of $V(x)$ are illustrated in Fig.~\ref{fig 1}.
    \begin{figure}[!t]
	\centering
	\includegraphics[clip,trim=40 30 40 60,width=0.85\columnwidth]{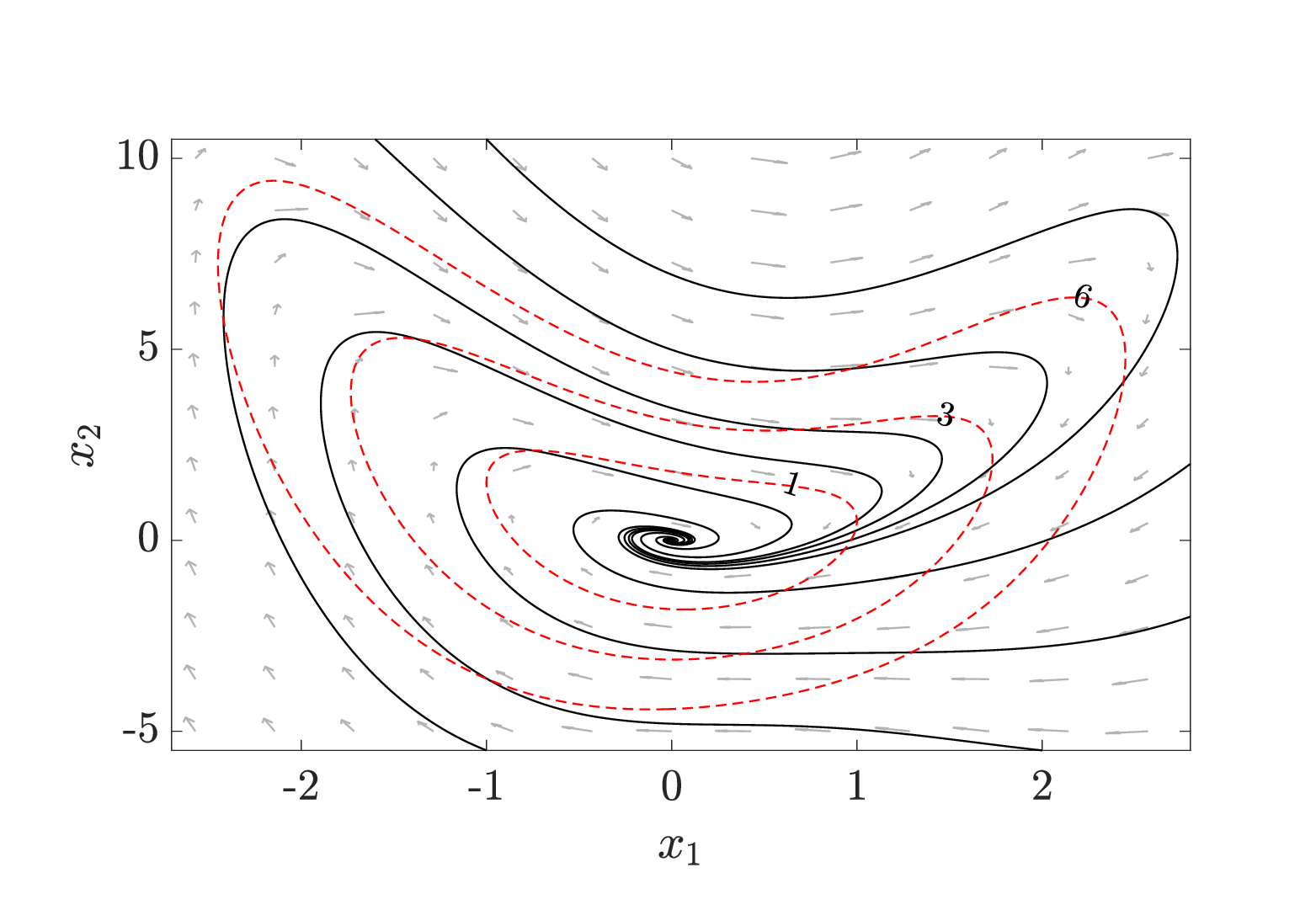}
        \caption{Phase portrait of the closed-loop system and the level sets of $V(x)$ in Example~\ref{ex 1}. The gray arrows depict the closed-loop vector field and the black lines are trajectories starting from the edges and converging to the origin. The level sets are indicated by red dashed lines.}
	\label{fig 1}
    \end{figure}
\end{example}

\section{Data-driven stabilization} \label{section Data-driven stabilization}
In the previous section, we proposed a global stabilization approach for a class of polynomial systems. Next, we extend this approach to the data-driven setting. Specifically, we aim to find a globally stabilizing controller for polynomial systems using data.

\subsection{Problem formulation} \label{section Problem formulation}
Given $F$ and $G$, we denote the system of the form \eqref{poly sys} by $(A_1,A_2,B_2)$. In this section, we consider a true system $(A_{s1},A_{s2},B_{s2})$, where the parameter matrices $A_{s1}$, $A_{s2}$ and $B_{s2}$ are assumed to be unknown. Instead, we are given the input data $u(t_0),u(t_1),\dots, u(t_{T-1})$, the state data $x(t_0),x(t_1),\dots, x(t_{T-1})$ and the state derivative data $\dot{x}(t_0),\dot{x}(t_1),\dots, \dot{x}(t_{T-1})$, where $T$ is a positive integer and $t_0 < t_1 < \cdots < t_{T-1}$. These data are generated by the true system influenced by unknown noise $w(t_0),w(t_1),\dots, w(t_{T-1})$, such that
\begin{equation}\label{realdataeq}
    \dot{\mathcal{X}} = \begin{bmatrix}
        A_{s1} \\ A_{s2}
    \end{bmatrix}\mathcal{F} + \begin{bmatrix}
        0 \\ B_{s2}
    \end{bmatrix} \mathcal{G} \mathcal{U} + \mathcal{W},
\end{equation}
where
$$
    \begin{aligned}
	\dot{\mathcal{X}} &:= \begin{bmatrix} 
            \dot{x}(t_0) & \dot{x}(t_1) & \cdots & \dot{x}(t_{T-1})
	\end{bmatrix},\\
		\mathcal{F} &:= \begin{bmatrix}
			F(x(t_0)) & F(x(t_1)) & \cdots & F(x(t_{T-1}))
		\end{bmatrix},\\
            \mathcal{G} &:= \begin{bmatrix}
			G(x(t_0)) & G(x(t_1)) & \cdots & 	G(x(t_{T-1}))
		\end{bmatrix},\\
		\mathcal{U} &:= \begin{bmatrix}
				u(t_0) & 0 & \cdots & 0\\
				0 & u(t_1) & \cdots & 0\\
				\vdots & \vdots & \ddots & \vdots\\
				0 & 0 & \cdots & u(t_{T-1})
			\end{bmatrix},\\
		\mathcal{W} &:= \begin{bmatrix}
				w(t_0) & w(t_1) & \cdots & w(t_{T-1})
			\end{bmatrix}.
    \end{aligned}
$$
In addition, we define
$$
	\mathcal{X} := \begin{bmatrix}
		x(t_0) & x(t_1) & \cdots & x(t_{T-1})
	\end{bmatrix}.
$$
Although the noise matrix $\mathcal{W}$ is unknown, we assume that it satisfies the following quadratic inequality:
\begin{equation} \label{noisebound}
	\begin{bmatrix}
		1 \\ \vecrz(\mathcal{W}^{\top})
	\end{bmatrix}^{\top} 
		\Phi \begin{bmatrix}
		1\\ \vecrz(\mathcal{W}^{\top})
	\end{bmatrix} \geq 0,
\end{equation}
where
$$
\Phi := \begin{bmatrix}
	\Phi_{11} & 0\\
	0 & -I
\end{bmatrix}
$$
is a given matrix with $\Phi_{11} \geq 0$. The noise model in \eqref{noisebound} is equivalent to the energy bound
\begin{equation} \label{noiseboundenergy}
    \sum_{j=1}^n \sum_{i=0}^{T-1}w_j^2(t_i) \leq \Phi_{11}.
\end{equation}
Note that if the noise samples at every time instant are bounded in norm, that is,
$$
	\|w(t_i)\|^2 \leq \omega \quad \forall i = 0,1,\dots,T-1,
$$
then \eqref{noiseboundenergy} holds with $\Phi_{11} = \omega T$. Of course, the true system may not be the only one that generates the given data. A system $(A_1,A_2,B_2)$ satisfying
\begin{equation}\label{dataeq}
    \dot{\mathcal{X}} = \begin{bmatrix}
        A_{1} \\ A_{2}
    \end{bmatrix}\mathcal{F} + \begin{bmatrix}
        0 \\ B_{2}
    \end{bmatrix} \mathcal{G} \mathcal{U} + \mathcal{W},
\end{equation}
for some $\mathcal{W}$ as in \eqref{noisebound} is called \emph{compatible} with the data. We denote the set of all systems compatible with the data $(\dot{\mathcal{X}},\mathcal{X}, \mathcal{U})$ by $\Sigma$, i.e.,
$$
    \Sigma := \{(A_1,A_2,B_2)\mid \eqref{dataeq} \text{ holds for some } \mathcal{W}\text{ satisfying \eqref{noisebound}} \}.
$$
It follows from \eqref{realdataeq} that $(A_{s1},A_{s2},B_{s2}) \in \Sigma$, but in general $\Sigma$ contains other systems.

In this section, we focus on designing a stabilizing controller for the true system. Since on the basis of the data we cannot distinguish between the true system and any other system in $\Sigma$, the goal is to find a single controller that stabilizes the origin of all systems in $\Sigma$. This motivates the following definition of data informativity \cite{DDCInformativity, DDCinformativityoverview} for stabilization of polynomial systems.

\begin{definition} \label{definition 1}
    {\rm The data $(\dot{\mathcal{X}},\mathcal{X},\mathcal{U})$ are called} informative for stabilization {\rm if there exist a continuously differentiable function $V : \mathbb{R}^n \rightarrow \mathbb{R}$, a continuous controller $K: \mathbb{R}^n \rightarrow \mathbb{R}^m$ and constants $c,r>0$ such that $K(0) = 0$, and \eqref{lemma GAS 1}, \eqref{lemma GAS 2} and \eqref{lemma GAS 3} hold for all $(A_1,A_2,B_2) \in \Sigma$, with $f$ as in \eqref{fx}.
    }
\end{definition}

For a controller $K$ satisfying $K(0) = 0$, the origin of the closed-loop system \eqref{closedloopsystem} is an equilibrium point since $Z(0) = 0$ and $F(x) = H(x)Z(x)$. If all conditions of Definition \ref{definition 1} hold, by Lemma \ref{lemma GAS}, the origin is a globally asymptotically stable equilibrium point of all closed-loop systems obtained by interconnecting any system $(A_1,A_2,B_2) \in \Sigma$ with the controller $K(x)$. Next, we aim to find conditions under which the data $(\dot{\mathcal{X}},\mathcal{X},\mathcal{U})$ are informative for stabilization. 

\subsection{A data-driven approach} \label{section A data-driven approach}
We observe from Theorem \ref{theorem model based} that the main challenge is to ensure that \eqref{bridge to ddc} holds for all systems compatible with the data. To address this, we reformulate the system set $\Sigma$. By vectorizing \eqref{dataeq}, we obtain
\begin{equation} \label{vec W}
	\text{vec}(\dot{\mathcal{X}}^{\top}) = \mathcal{D}^{\top} v+ \text{vec}(\mathcal{W}^{\top}),
\end{equation}
where 
\begin{equation} \label{Def D v}
	\mathcal{D} := \begin{bmatrix}
		I_{n_1} \otimes \mathcal{F} &  0 \\
        0 & I_{n_2} \otimes \mathcal{F}\\
        0 & I_{n_2} \otimes \mathcal{G}\mathcal{U}
	\end{bmatrix}\ \text{and}\ v := \begin{bmatrix}
		\text{vec}(A_1^{\top}) \\ \text{vec}(A_2^{\top}) \\ \text{vec}(B_2^{\top})
	\end{bmatrix}.
\end{equation}
With \eqref{noisebound}, the system $(A_1,A_2,B_2) \in \Sigma$ if and only if the corresponding $v$ satisfies
\begin{equation} \label{bound v}
	\begin{bmatrix}
		1 \\ v
	\end{bmatrix}^{\top} N
	\begin{bmatrix}
		1 \\ v
	\end{bmatrix} \geq 0,
\end{equation}
where
\begin{equation} \label{data matrix N}
\begin{aligned}
    N := \begin{bmatrix}
1 & 0 \\
\vecrz(\dot{\mathcal{X}}^{\top}) & 	- \mathcal{D}^{\top}
\end{bmatrix}^{\top}\Phi \begin{bmatrix}
1 & 0 \\
\vecrz(\dot{\mathcal{X}}^{\top}) & 	- \mathcal{D}^{\top}
\end{bmatrix}.
\end{aligned}
\end{equation}
As such, the set $\Sigma$ of systems compatible with the data is characterized by a quadratic inequality in $v$. We denote $\ell = n_1f+n_2(f+g)$ and then $N \in \mathbb{S}^{1+\ell}$. For simplicity, we define the set
$$
    \mathcal{Z}_{\ell}(N) := \left\{z \in \mathbb{R}^{\ell} \mid \begin{bmatrix}
		1 \\ z
    \end{bmatrix}^{\top} N \begin{bmatrix}
		1 \\ z
	\end{bmatrix} \geq 0 \right\},
$$
which consists of all $v$ satisfying \eqref{bound v}. 

With this notation in place, we formulate a condition in terms of $v$ that implies that \eqref{bridge to ddc} holds for all $(A_1,A_2,B_2) \in \Sigma$. Let $y$ be a vector in $\mathbb{R}^p$. A sufficient condition is that for all $(A_1,A_2,B_2) \in \Sigma$, we have that
\begin{equation} \label{yMy}
    y^{\top}M(x)y \geq \epsilon_2(x)\epsilon_3^{-1}(x)y^{\top}P(x_1)P(x_1)y \  \text{ for all } x,y.
\end{equation}
We rewrite the left-hand side of \eqref{yMy} as $y^{\top}M(x)y = R^{\top}(x,y)v$, where $v$ is defined in \eqref{Def D v}, and
\begin{equation} \label{Rx}
    \begin{aligned}
        &R(x,y) := \\
        &\begin{bmatrix}
                \left(\! \frac{\partial}{\partial x_1}y^{\top}\! P(x_1) y\! \right)^{\!\top}\!\!
                 \otimes  F(x)\! -\! 2\left(\! \frac{\partial Z}{\partial x_1}(x) \! \right)^{\!\top}\! y  \otimes  H(x) P(x_1) y\\
                -2\left(\frac{\partial Z}{\partial x_2}(x)\right)^{\!\top}\!y  \otimes H(x) P(x_1) y\\
                -2\left(\frac{\partial Z}{\partial x_2}(x)\right)^{\!\top}\!y \otimes G(x)L(x)y
            \end{bmatrix}\!.
    \end{aligned}
\end{equation}
Therefore, if the inequality 
\begin{equation} \label{Rv geq}
    R^{\top}(x,y)v - \epsilon_2(x)\epsilon_3^{-1}(x)y^{\top}P(x_1)P(x_1)y \geq 0
\end{equation}
holds for all $x,y$ and $v \in \mathcal{Z}_{\ell}(N)$ then \eqref{bridge to ddc} holds for all $(A_1,A_2,B_2) \in \Sigma$.

Building on the inequality \eqref{Rv geq}, we will introduce our data-driven control approach. Before we do so, we make the following blanket assumption on the data and the polynomial matrices $F$ and $G$.
\begin{assumption} \label{assumption full row rank}
The matrix $
    \begin{bmatrix}
	\mathcal{F}^{\top} & \mathcal{U}^{\top}\mathcal{G}^{\top}
    \end{bmatrix}$ has full column rank.
\end{assumption}

This implies that the data matrix $\mathcal{D}$ has full row rank. Partition
$$
N = \begin{bmatrix}
	N_{11} & N_{12} \\
	N_{21} & N_{22}
\end{bmatrix},
$$
where $N_{11} \in \mathbb{R}$. Since $\mathcal{D}$ has full row rank, we have $N_{22}<0$. Let $N | N_{22}:= N_{11} - N_{12}N_{22}^{-1}N_{21}$ denote the Schur complement of $N$ with respect to $N_{22}$. Since the set $\mathcal{Z}_{\ell}(N)$ is nonempty, it follows from \cite{DDCQMI} that $N | N_{22} \geq 0$. Then, the following specialized S-lemma \cite[Lem.~7]{paper1density} is applicable.
\begin{lemma} \label{lemma slemma}
    Let $N \in \mathbb{S}^{1+r}$, $N_{22} < 0$, $N|N_{22} \geq 0$, $a \in \mathbb{R}$, and $\lambda \in \mathbb{R}^r$. Then, $\lambda^{\top} z + a \geq 0\ \text{ for all } z \in \mathcal{Z}_r(N)$ if and only if
    \begin{equation} \label{lemma b}
	\begin{bmatrix}
			-\lambda^{\top}N_{22}^{-1}N_{21} + a & (N|N_{22})^{\frac{1}{2}}\lambda^{\top} \\
			(N|N_{22})^{\frac{1}{2}}\lambda & (\lambda^{\top}N_{22}^{-1}N_{21}-a)N_{22}
	\end{bmatrix} \geq 0.
    \end{equation}
\end{lemma}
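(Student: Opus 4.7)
The plan is to reduce the claimed equivalence to a one-dimensional inequality over an ellipsoid and then recognize the resulting scalar condition as the Schur complement of the LMI in \eqref{lemma b}. First I would put $\mathcal{Z}_r(N)$ in standard form: expanding the quadratic and completing the square (which is legitimate because $N_{22} < 0$) yields
$$
\mathcal{Z}_r(N) = \bigl\{z \in \mathbb{R}^r : (z + N_{22}^{-1}N_{21})^\top(-N_{22})(z + N_{22}^{-1}N_{21}) \leq N|N_{22}\bigr\}.
$$
After the affine change of variable $\tilde z := z + N_{22}^{-1}N_{21}$, the set becomes a centered ellipsoid with shape matrix $-N_{22} > 0$ and squared radius $N|N_{22} \geq 0$, while the inequality $\lambda^\top z + a \geq 0$ becomes $\lambda^\top \tilde z + b \geq 0$ with $b := a - \lambda^\top N_{22}^{-1}N_{21}$; note that $b$ is exactly the $(1,1)$-entry of the LMI in \eqref{lemma b}.

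Next, I would minimize the linear functional over the ellipsoid. By Cauchy--Schwarz applied to the inner product induced by $-N_{22}$,
$$
\min_{\tilde z^\top(-N_{22})\tilde z \leq N|N_{22}} \lambda^\top \tilde z = -\sqrt{(N|N_{22})\,\lambda^\top(-N_{22})^{-1}\lambda},
$$
so the affine inequality holds on all of $\mathcal{Z}_r(N)$ if and only if $b \geq 0$ and $b^2 \geq (N|N_{22})\,\lambda^\top(-N_{22})^{-1}\lambda$. The remaining step is to recognize this as the Schur complement condition for \eqref{lemma b}: the $(2,2)$-block $-bN_{22}$ is positive semidefinite exactly when $b \geq 0$ (since $-N_{22} > 0$), and when $b > 0$ the Schur complement equals $(1/b)\bigl(b^2 - (N|N_{22})\,\lambda^\top(-N_{22})^{-1}\lambda\bigr)$, so positive semidefiniteness of the full block matrix is equivalent to the scalar condition just derived.

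The one subtlety I expect is the boundary case $b = 0$, where a Schur complement cannot be invoked directly. Here I would argue separately that \eqref{lemma b} forces the off-diagonal block $(N|N_{22})^{1/2}\lambda$ to vanish---otherwise a suitable $2\times 2$ principal minor would become negative---and then the scalar condition reduces to $(N|N_{22})\,\lambda^\top(-N_{22})^{-1}\lambda = 0$, which is automatic once $(N|N_{22})^{1/2}\lambda = 0$. Handling this degenerate case, together with the analogous converse direction, is the only part of the proof that is not purely mechanical; the rest amounts to combining the ellipsoid reformulation, Cauchy--Schwarz, and Schur complement as outlined.
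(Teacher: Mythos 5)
Your proposal is correct and complete: the ellipsoid reformulation, the Cauchy--Schwarz minimization of the affine functional, the Schur complement identification, and the separate treatment of the degenerate case $b=0$ all check out. Note that the paper does not prove Lemma~\ref{lemma slemma} itself but cites it from \cite[Lem.~7]{paper1density}; your argument is essentially the standard route for this specialized S-lemma and would serve as a self-contained proof.
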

\vspace{1 em}

In comparison to the classical S-lemma \cite{SurveySlemma}, Lemma \ref{lemma slemma} derives conditions under which a linear inequality is implied by a quadratic one. However, by avoiding introducing an additional decision variable, this lemma may reduce conservatism in applying SOS relaxation \cite{paper1density}. We then state the main result of this section.
\begin{theorem} \label{theorem data driven}
    The data $(\dot{\mathcal{X}},\mathcal{X},\mathcal{U})$ are informative for stabilization if there exist constants $\epsilon_1,c,r > 0$, polynomials $\epsilon_2,\epsilon_3\in\mathbb{R}[x]$, and polynomial matrices $P \in \mathbb{R}^{p \times p}[x_1]$ and $K\in\mathbb{R}^{m \times p}[x]$, such that conditions \ref{theorem model based 1}, \ref{theorem model based 2} and \ref{theorem model based 3} of Theorem~\ref{theorem model based} hold, and  
    \begin{equation} \label{theorem data driven sos}
	\begin{aligned}
	    \begin{bmatrix}
                \!- R^{\top\!}(x,y)N_{22}^{-1\!}N_{21}\!   & (N|N_{22})^{\frac{1}{2}}R^{\top\!}(x,y) & \!\!\epsilon_2(x) y^{\!\top\!\!}P(x_1) \\
			(N|N_{22})^{\frac{1}{2}}R(x,y) & \!\!R^{\top\!}(x,y)N_{22}^{-1\!}N_{21}\!N_{22} & 0\\
			\epsilon_2(x)P(x_1)y & 0 & \frac{1}{2} \epsilon_2(x) \epsilon_3(x) I
		\end{bmatrix}&\\
        \in \sos^{1+\ell+p}[x,y]&,
	\end{aligned}	
    \end{equation}
    where $R$ is defined as in \eqref{Rx}. Then, the origin of the closed-loop system, obtained by interconnecting any system in $\Sigma$ and the controller defined as in \eqref{Kx}, is globally asymptotically stable.
\end{theorem}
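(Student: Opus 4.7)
The plan is to verify the hypotheses of Definition~\ref{definition 1} with $V(x)=Z^\top(x)P^{-1}(x_1)Z(x)$ and the controller $K(x)=L(x)P^{-1}(x_1)Z(x)$ specified by \eqref{Kx}. Once \eqref{lemma GAS 1}--\eqref{lemma GAS 3} are shown to hold simultaneously for every $(A_1,A_2,B_2)\in\Sigma$, the global asymptotic stability statement for every closed-loop system follows directly from Lemma~\ref{lemma GAS}. Conditions \ref{theorem model based 1} and \ref{theorem model based 3} of Theorem~\ref{theorem model based}, together with Assumption~\ref{assumption Z0x0}, yield \eqref{lemma GAS 1} independently of the system, and $K(0)=0$ because $Z(0)=0$. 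Hence the entire burden reduces to establishing the uniform Lyapunov decay \eqref{bridge to ddc} for every compatible system.

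The central step exploits the reformulation of Section~\ref{section A data-driven approach}: \eqref{bridge to ddc} holds for all compatible systems provided \eqref{Rv geq} holds for every $v\in\mathcal{Z}_\ell(N)$ and every $(x,y)$. For fixed $(x,y)$, inequality \eqref{Rv geq} is linear in $v$ over the quadratic set $\mathcal{Z}_\ell(N)$, so it falls within the scope of Lemma~\ref{lemma slemma} with $\lambda=R(x,y)$ and $a=-\epsilon_2(x)\epsilon_3^{-1}(x)\,y^\top P(x_1)^2 y$. To connect this with the SOS condition \eqref{theorem data driven sos}, I would eliminate the lower-right block $\tfrac12\epsilon_2(x)\epsilon_3(x)I$ by Schur complement, which is legitimate for $x\neq 0$ since condition \ref{theorem model based 1} makes this block positive definite. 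The Schur correction contributes precisely the scalar $\epsilon_2(x)\epsilon_3^{-1}(x)y^\top P(x_1)^2 y$ at the $(1,1)$ slot, and the residual $(1+\ell)\times(1+\ell)$ inequality then coincides with the LMI \eqref{lemma b} for the chosen $\lambda$ and $a$. Invoking Lemma~\ref{lemma slemma} consequently produces \eqref{Rv geq} uniformly in $v\in\mathcal{Z}_\ell(N)$ and $(x,y)$.

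With \eqref{Rv geq} established, the identification of $y^\top M(x)y$ with $R^\top(x,y)v$ gives the pointwise bound $y^\top M(x)y \geq \epsilon_2(x)\epsilon_3^{-1}(x)\,y^\top P(x_1)^2 y$ for every $x$, $y$ and every $(A_1,A_2,B_2)\in\Sigma$; as $y$ is arbitrary this upgrades to $M(x)\geq\epsilon_2(x)\epsilon_3^{-1}(x)P(x_1)^2$ uniformly over $\Sigma$. Substituting into \eqref{theorem model based pVpx} then gives \eqref{bridge to ddc} for every compatible system, which in turn supplies \eqref{lemma GAS 2}; combined with condition \ref{theorem model based 2} it also supplies \eqref{lemma GAS 3}. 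All conditions of Definition~\ref{definition 1} being in place, the data are informative for stabilization, and Lemma~\ref{lemma GAS} yields the claimed global asymptotic stability for every closed-loop system with $(A_1,A_2,B_2)\in\Sigma$.

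The step I expect to be delicate is the Schur-complement matching. The SOS matrix \eqref{theorem data driven sos} is arranged in a specific block pattern, and verifying that after eliminating the $\tfrac12\epsilon_2\epsilon_3 I$ block the remaining matrix matches \eqref{lemma b} exactly requires careful bookkeeping of the interaction between the $(N|N_{22})^{1/2}R^\top$ and $R^\top N_{22}^{-1}N_{21}N_{22}$ terms and the Schur correction. The sign structure guaranteed by Assumption~\ref{assumption full row rank}, namely $N_{22}<0$ and $N|N_{22}\geq 0$, is what ensures the reduced LMI fits the precise shape demanded by Lemma~\ref{lemma slemma}.
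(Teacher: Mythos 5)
Your overall architecture is the paper's: reduce everything to \eqref{Rv geq}, pass through Lemma~\ref{lemma slemma} with $\lambda=R(x,y)$ and $a=-\epsilon_2(x)\epsilon_3^{-1}(x)y^\top P(x_1)P(x_1)y$, and then feed \eqref{bridge to ddc} into Lemma~\ref{lemma GAS} via conditions \ref{theorem model based 1}--\ref{theorem model based 3}. But the step you flagged as delicate is exactly where your argument breaks: after Schur-complementing out the block $\tfrac12\epsilon_2(x)\epsilon_3(x)I$ from \eqref{theorem data driven sos}, the residual $(1{+}\ell)\times(1{+}\ell)$ matrix does \emph{not} coincide with \eqref{lemma b}. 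First, because of the factor $\tfrac12$, the correction at the $(1,1)$ slot is $2\epsilon_2(x)\epsilon_3^{-1}(x)y^\top P(x_1)P(x_1)y=-2a$, not $-a$, so the corner entry is $-\lambda^\top N_{22}^{-1}N_{21}+2a$. Second, and more seriously, the $(2,2)$ block of the residual matrix is $\lambda^\top N_{22}^{-1}N_{21}N_{22}$ with no $a$-dependence at all, whereas \eqref{lemma b} requires $(\lambda^\top N_{22}^{-1}N_{21}-a)N_{22}$. So the residual inequality is the matrix \eqref{theorem 2a} of the paper, which is a genuinely different LMI from \eqref{lemma b}, and Lemma~\ref{lemma slemma} cannot be invoked directly.

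The missing bridge is a scalar argument that crucially uses $a\le 0$ (which holds by condition \ref{theorem model based 1} and $P(x_1)P(x_1)\ge 0$). Writing $b:=-\lambda^\top N_{22}^{-1}N_{21}$, positive semidefiniteness of \eqref{theorem 2a} forces $b+2a\ge 0$, hence $b>0$ when $a<0$, and its Schur complement with respect to the $(2,2)$ block gives $(b+2a)b-(N|N_{22})\lambda^\top(-N_{22})^{-1}\lambda\ge 0$. Since $(b+a)^2=(b+2a)b+a^2\ge (b+2a)b$ and $b+a\ge -a>0$, a reverse Schur complement (now with $(2,2)$ block $(b+a)(-N_{22})>0$) recovers \eqref{lemma b}; the case $a=0$ is immediate. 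Without this step your chain of implications has a false link --- the two matrices you identify are not equal --- even though the conclusion is correct and the repair is short.
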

\begin{proof}
    To prove the theorem, we apply Lemma \ref{lemma GAS} to an arbitrary system $(A_1,A_2,B_2) \in \Sigma$, with the functions $V$ and $f$ given in \eqref{Vx} and \eqref{fx}, respectively. 

    As mentioned in the proof of Theorem \ref{theorem model based}, \eqref{lemma GAS 1} holds. Let $x \in \mathbb{R}^n$ and $y \in \mathbb{R}^p$. By condition \ref{theorem model based 1} of Theorem \ref{theorem model based}, $\epsilon_2(x) \geq 0$ and $\epsilon_3(x)~>~0$. Using a Schur complement argument on the matrix in \eqref{theorem data driven sos}, we have that the matrix inequality 
    \begin{equation} \label{theorem 2a}
        \begin{bmatrix}
			-\lambda^{\top}N_{22}^{-1}N_{21} + 2a & (N|N_{22})^{\frac{1}{2}}\lambda^{\top} \\
			(N|N_{22})^{\frac{1}{2}}\lambda & \lambda^{\top}N_{22}^{-1}N_{21}N_{22}
	\end{bmatrix} \geq 0
    \end{equation}
    holds with $\lambda =  R(x,y)$ and 
    $$
    a = - \epsilon_2(x) \epsilon_3^{-1}(x) y^{\top}P(x_1)P(x_1)y.
    $$ 
    
    We now claim that \eqref{lemma b} also holds. Note that the proof is trivial for $a = 0$; therefore, since $a \leq 0$, it suffices to consider the case $a < 0$. In this case, it is clear that $-\lambda^{\top} N_{22}^{-1}N_{21}>0$. Using a Schur complement argument on the matrix in \eqref{theorem 2a}, we have
    $$
    \begin{aligned}
        (-\lambda^{\!\top}\!N_{22}^{-1}\!N_{21}\! + 2a)(-\lambda^{\!\top}\!N_{22}^{-1}\!N_{21}) - (N|N_{22})\lambda^{\!\top}\! (-N_{22})^{\!-1}\!\lambda \geq 0.
    \end{aligned}
    $$
    This implies
    $$
        (-\lambda^{\top}N_{22}^{-1}N_{21} + a)^2 - (N|N_{22})\lambda^{\top} (-N_{22})^{-1}\lambda \geq 0.
    $$
    Since $-\lambda^{\top}N_{22}^{-1}N_{21}+a>0$, by a Schur complement argument, the matrix inequality \eqref{lemma b} holds. This proves that \eqref{lemma b} holds.
    
    Then, it follows from Lemma \ref{lemma slemma} that \eqref{Rv geq} holds for all $v \in \mathcal{Z}_{\ell}(N)$. Moreover, since \eqref{Rv geq} holds for all $x,y$, we have that \eqref{bridge to ddc} holds for all $(A_1,A_2,B_2) \in \Sigma$. Together with condition \ref{theorem model based 2} of Theorem \ref{theorem model based}, we conclude that \eqref{lemma GAS 2} and \eqref{lemma GAS 3} hold for all $(A_1,A_2,B_2) \in \Sigma$.
\end{proof}

\begin{remark} \label{remark index set}
    In the definition of $\Sigma$, the entries of $A_{s1}$, $A_{s2}$ and $B_{s2}$ are assumed to be fully unknown. However, in physical systems, certain system parameters are often given. For example, in a mechanical system, one state variable may represent the position of a mass, while another represents its velocity. In this case, the row of the matrix $A_{s1}$ corresponding to the position state variable is fully known. We note that our approach in Theorem \ref{theorem data driven} can be adapted to take into account the prior knowledge of entries of $A_{s1}$, $A_{s2}$ and $B_{s2}$. Following \cite{paper1density}, let $\alpha$ be the set of indices for which the corresponding entries of $v$ are given, and let $\hat{\alpha}:= \{1,2,\dots,\ell\} \setminus \alpha$. We define $\mathcal{D}_{\alpha}$ and $\mathcal{D}_{\hat{\alpha}}$ as the submatrices of $\mathcal{D}$ formed by the rows indexed by $\alpha$ and $\hat{\alpha}$, respectively. Define $\hat{N}$ as
    $$
    \begin{bmatrix}
            1 & 0 \\
        \vecrz(\dot{\mathcal{X}}^{\top}) - \mathcal{D}^{\top}_{\alpha}v_{\alpha} & 	- \mathcal{D}^{\top}_{\hat{\alpha}}
        \end{bmatrix}^{\top} \Phi \begin{bmatrix}
            1 & 0 \\
        \vecrz(\dot{\mathcal{X}}^{\top}) - \mathcal{D}^{\top}_{\alpha}v_{\alpha} & 	- \mathcal{D}^{\top}_{\hat{\alpha}}
        \end{bmatrix}.
    $$
    Let $\gamma$ denote the cardinality of $\hat{\alpha}$, and then $\hat{N}\in \mathbb{S}^{1 + \gamma}$. We denote the set of systems compatible with the data and the prior knowledge by ${\Sigma}_\alpha$. The system $(A_1,A_2,B_2)\in {\Sigma}_\alpha$ if and only if the corresponding vector $v_{\hat{\alpha}} \in \mathcal{Z}_{\gamma}(\hat{N})$. Then, Theorem \ref{theorem data driven} can be adapted by replacing \eqref{theorem data driven sos} with \eqref{theorem data driven sos alpha}. If \eqref{theorem data driven sos alpha} and the other conditions of Theorem \ref{theorem data driven} hold, the obtained controller renders the origin of all systems in ${\Sigma}_\alpha$ globally asymptotically stable. As we will illustrate in Example \ref{ex 4 flexjoint}, by incorporating the prior knowledge, the adapted approach is applicable to the model of a nonlinear mechanical system.
    \begin{figure*}[!t]
    \normalsize
    \begin{equation} \label{theorem data driven sos alpha}
		\begin{bmatrix}
                - R^{\top}_{\hat{\alpha}}(x,y)\hat{N}_{22}^{-1}\hat{N}_{21} + R^{\top}_{\alpha}(x,y)v_{\alpha}  & (\hat{N}|\hat{N}_{22})^{\frac{1}{2}}R^{\top}_{\hat{\alpha}}(x,y) & \epsilon_2(x) y^{\top}P(x_1) \\
			(\hat{N}|\hat{N}_{22})^{\frac{1}{2}}R_{\hat{\alpha}}(x,y) & \left(R^{\top}_{\hat{\alpha}}(x,y)\hat{N}_{22}^{-1}\hat{N}_{21} - R^{\top}_{\alpha}(x,y)v_{\alpha}\right)\hat{N}_{22} & 0\\
			\epsilon_2(x)P(x_1)y & 0 & \frac{1}{2}\epsilon_2(x)\epsilon_3(x) I
		\end{bmatrix} \in \sos^{1+\gamma+p}[x,y].
    \end{equation}
    \hrulefill 
    \end{figure*}
\end{remark}

\section{Illustrative examples} \label{section simulation}
In this section, we first illustrate Theorem \ref{theorem data driven} with two numerical examples, followed by a verification of the adapted approach using \eqref{theorem data driven sos alpha} on a model of a mechanical system. The simulations are conducted in MATLAB, using YALMIP \cite{Yalmip} with the SOS module \cite{YalmipSOS}, and the solver MOSEK \cite{mosek}. 
\begin{example}\label{ex 1 datadriven}
    We again consider the system \eqref{ex sys 1}. After taking 
    $$
        F(x) = \begin{bmatrix}
            x_1 & x_2 & x_1^2
        \end{bmatrix}^{\top} \text{ and } G(x) = 1,
    $$
    we have $A_{s1} = \begin{bmatrix}
        0 & 1 & -1 
        \end{bmatrix}$, $A_{s2} = \begin{bmatrix}
        0 & 0 & 0
    \end{bmatrix}$ and $B_{s2} = 1$. Let $x(0) = \begin{bmatrix}
		-1 & 1
    \end{bmatrix}^{\top}$ and $u(t)= 2\sin(5t)+\cos(3t)$ for $t \in [0,5]$. We collect $T=4$ data samples from the system at the time instants $t_0 = 0.60$, $t_1 = 0.67$, $t_2 = 0.74$ and $t_3 = 0.81$. During the experiment, the noise samples are drawn independently and uniformly at random from the ball $\{r \in \mathbb{R}^2 \mid \|r\| \leq \omega \}$, where $\omega = 0.01$. Define $\Phi_{11} = \omega^2 T $. The data matrices are
    $$
    	\begin{aligned}
    		\dot{\mathcal{X}} &= \begin{bmatrix}
    			1.8948  &  1.9749  &  1.9618  &  1.8536\\
                    0.0478  & -0.8341  & -1.6731  & -2.3295
    		\end{bmatrix},\\
    		\mathcal{X} &= \begin{bmatrix}
    			-0.4769 &  -0.3409 &  -0.2025  & -0.0682\\
                    2.1206  &  2.0930  &  2.0048  &  1.8637
    		\end{bmatrix},\\
    		\mathcal{U} &= \operatorname{diag}(0.0550, -0.8390, -1.6642, -2.3344).
            \end{aligned}
    $$
    Furthermore, we have
    $$
    	\begin{aligned}
    		\mathcal{F} &= \begin{bmatrix}
    			-0.4769 &  -0.3409 &  -0.2025  & -0.0682\\
                    2.1206  &  2.0930  &  2.0048  &  1.8637\\
    			0.2275  &  0.1162  &  0.0410  &  0.0046
    		\end{bmatrix},\\
    		\mathcal{GU} &= \begin{bmatrix}
    			0.0550  & -0.8390  & -1.6642 &  -2.3344
    		\end{bmatrix}.
    	\end{aligned}
    $$
    Choose $Z(x) = x$ and
    \begin{equation} \label{Hx}
        H(x) = \begin{bmatrix}
        1 & 0\\
        0 & 1\\
        x_1 & 0
    \end{bmatrix}.
    \end{equation}
    After choosing $\epsilon_1 = 0.1$, $\epsilon_2 = 0.01$ and $\epsilon_3(x) = 2 + 2x_1^2$, conditions \ref{theorem model based 1} and \ref{theorem model based 2} of Theorem \ref{theorem model based} hold. We set the maximum degrees of the entries of $P\in \mathbb{R}^{2 \times 2}[x_1]$ and $L\in \mathbb{R}^{1 \times 2}[x]$ to 4 and 6, respectively. Define $R$ as in \eqref{Rx}. We formulate an SOS programming imposing condition \ref{theorem model based 3} of Theorem \ref{theorem model based} and the SOS constraint \eqref{theorem data driven sos}, and we solve this for $P$ and $L$. We obtain 
    $$
    \begin{aligned}
        P(x_1) &= \begin{bmatrix}
            0.3791 & -0.4709-0.2067x_1^2\\
            -0.4709-0.2067x_1^2 & 2.9374 + 0.2580 x_1^4
        \end{bmatrix},\\
        L(x) &= \begin{bmatrix}
            -2.3092 & - 1.6748 - 1.2095 x_2^2 - 1.5350 x_1^6
        \end{bmatrix}.
    \end{aligned}
    $$
    With $P(x_1)$ and $L(x)$ in place, we calculate $V(x)$ and $K(x)$, as defined in \eqref{ex1 Vx} and \eqref{ex1 Kx}, respectively, where
    $$
    \begin{aligned}
        &\begin{aligned}
            \eta(x) = &\ 2.9374 x_1^2+0.9418 x_1x_2+0.3791 x_2^2+0.4134 x_1^3x_2\\
            &+0.2580 x_1^6,
        \end{aligned}\\
        &\begin{aligned}
            \xi(x) = & -7.5717 x_1-1.7223 x_2-0.3462 x_1^3-0.4773 x_1^2x_2\\
            &-0.5696 x_1x_2^2 -0.4585 x_2^3 -0.5958 x_1^5-0.2500 x_1^3x_2^2\\
            &-0.7228 x_1^7-0.5819 x_1^6x_2-0.3173 x_1^9,
        \end{aligned}\\
        & \det{P(x_1)} = 0.8918 - 0.1947 x_1^2 + 0.0551 x_1^4.
    \end{aligned}
    $$
    This verifies global asymptotic stability of the closed-loop system with respect to the system \eqref{ex sys 1} and the obtained controller $K$. The phase portrait of the closed-loop system and the level sets of $V(x)$ are illustrated in Fig.~\ref{fig 2}.
    \begin{figure}[!t]
        \centering
        \subfloat[]{
    	\includegraphics[clip,trim=45 30 40 60,width=0.85\columnwidth]{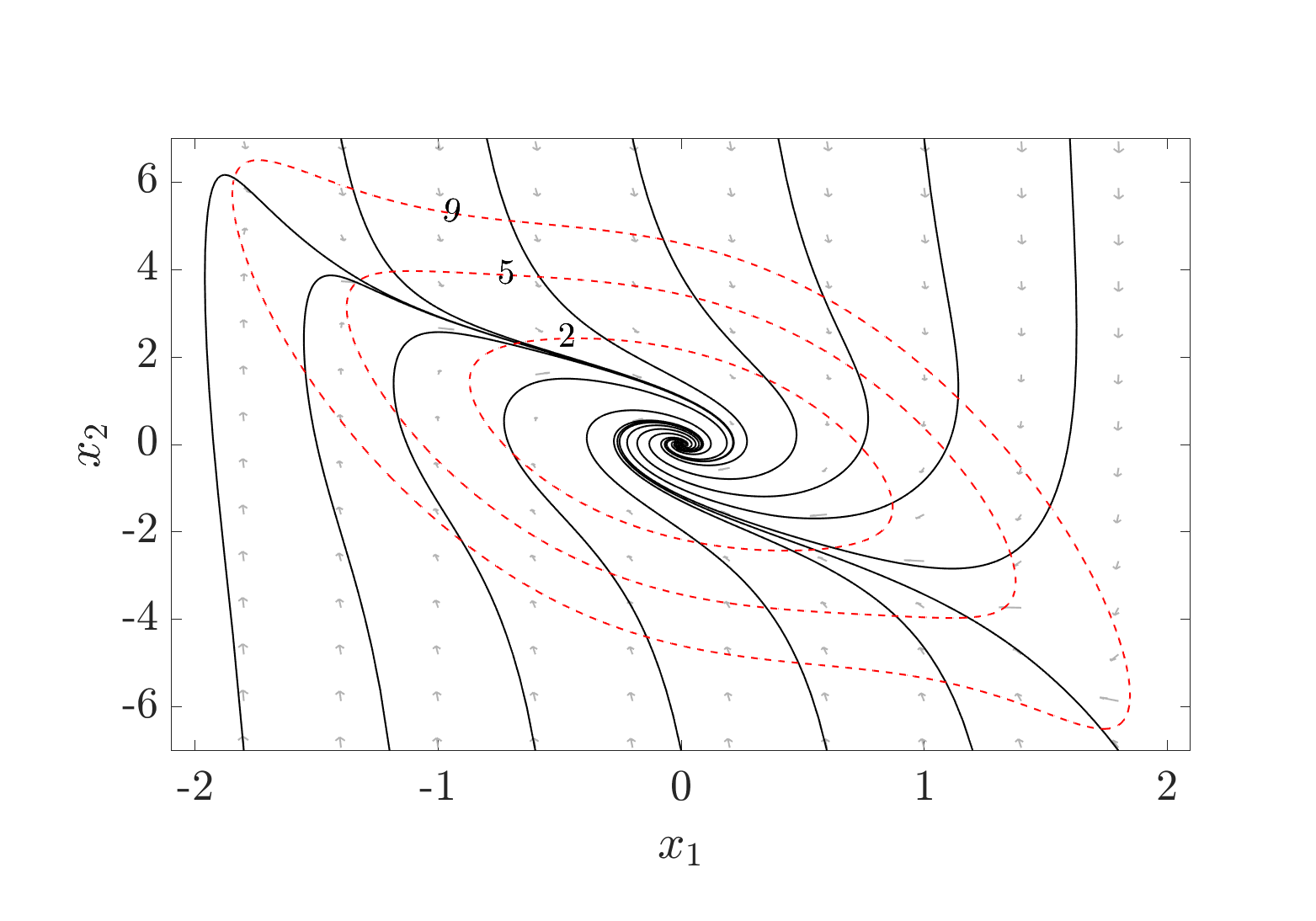}
    	\label{fig 2}
        }
        \hfill
        \subfloat[]{
		\includegraphics[clip,trim=45 30 40 60,width=0.85\columnwidth]{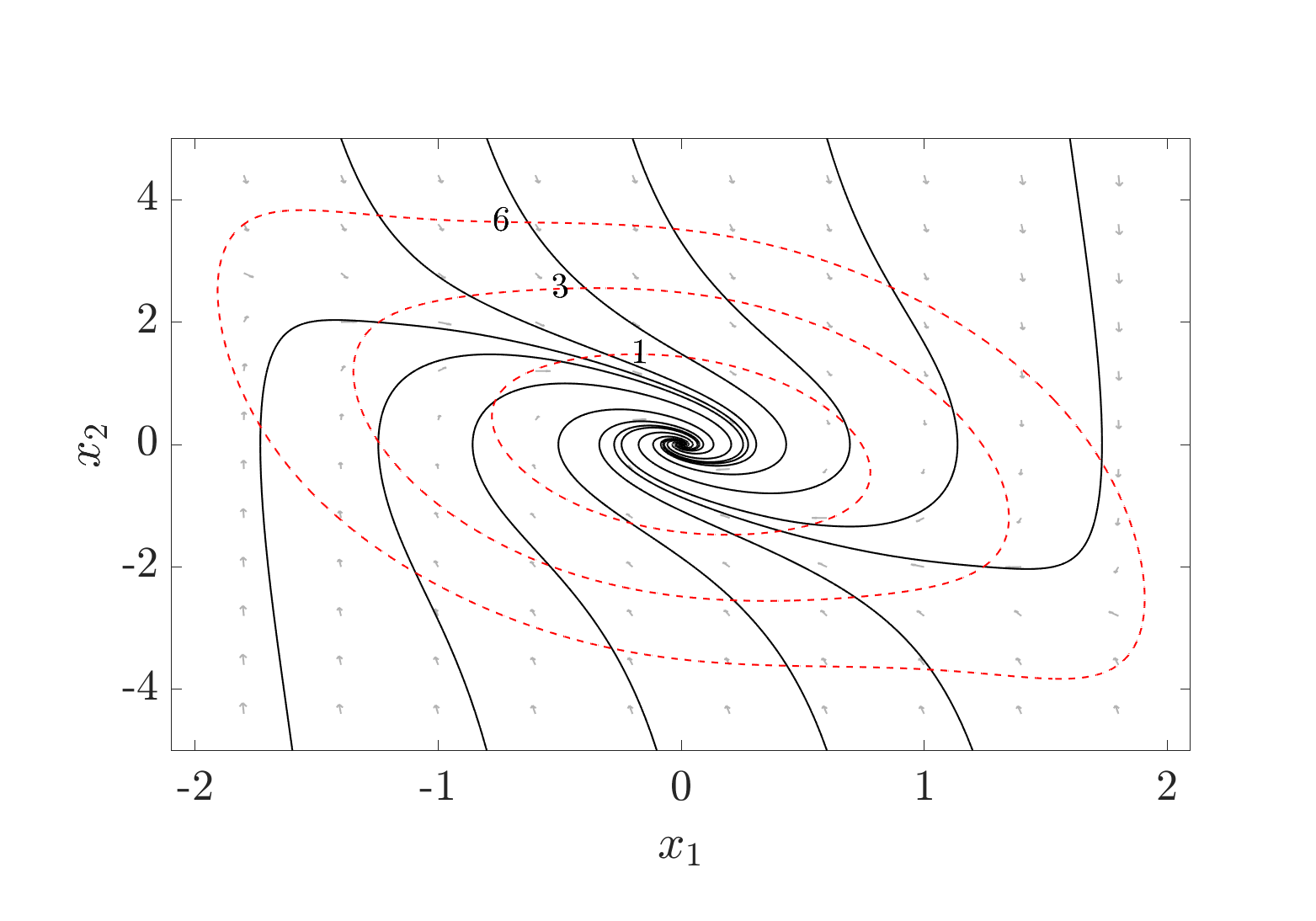}
		\label{fig 3}
        }
        \caption{Phase portrait of the closed-loop system and the level sets of $V(x)$ in Example \ref{ex 1 datadriven} (see \protect\subref{fig 2}) and Example \ref{ex 3} (see \protect\subref{fig 3}). The gray arrows depict the closed-loop vector field and the black lines are trajectories starting from the edges and converging to the origin. The level sets are indicated by red dashed lines.}
    \end{figure}
\end{example}

\begin{example} \label{ex 3}
    Consider the following system
    \begin{equation} \label{sys ex3}
    \begin{aligned}
        \dot{x}_1 = x_2,\quad
        \dot{x}_2 = -x_1+x_2 -x_1^3 +u.
    \end{aligned}
    \end{equation}
    Let
    $$
    F(x) = \begin{bmatrix}
        x_1 & x_2 & x_1^3
    \end{bmatrix}^{\top} \text{ and } G(x) = 1.
    $$
    Then, $A_{s1} = \begin{bmatrix}
        0 & 1 & 0
        \end{bmatrix}$, $A_{s2} = \begin{bmatrix}
        -1 & 1 & -1
    \end{bmatrix}$ and $B_{s2} = 1$. We choose $Z(x) = x$, and all $H(x)$ are of the form 
    $$
    H(x) = \begin{bmatrix}
        1 & 0\\
        0 & 1\\
        x_1^2 & 0
    \end{bmatrix} + J(x) \begin{bmatrix}
        x_2 & -x_1
    \end{bmatrix},
    $$
    where $J \in \mathbb{R}^3[x]$ is arbitrary. It can be verified that for the true system \eqref{sys ex3}, there exist $L \in \mathbb{R}^{1 \times 2}[x]$ and $P \in \mathbb{S}^2$ such that the matrix inequality $M(x) > 0$ holds for all $x \neq 0$. However, in the context of data-driven control, restricting $P$ to be constant is too conservative to find a stabilizer, since the inequality must hold for all systems compatible with the data. To illustrate the conservatism, suppose that the set $\Sigma$ has nonempty interior. Let $a$ be a sufficiently small positive constant such that $(\bar{A}_1,A_{s2},B_{s2}) \in \Sigma$, where $\bar{A}_1 := \begin{bmatrix}
        0 & 1 & a
    \end{bmatrix}$. For $(\bar{A}_1,A_{s2},B_{s2})$, suppose that there exist $L \in \mathbb{R}^{1 \times 2}[x]$ and $P \in \mathbb{S}^2$ such that $M(x) > 0$ for all $x \neq 0$. The $(1,1)$-entry of $M(x)$ is
    $$
        M_{11}(x) \! = \! -2(J_2(x)\!+\!aJ_3(x))(P_{11}x_2\!-\!P_{12}x_1)\! -\! 2(P_{11}ax_1^2\!+\!P_{21}).
    $$ 
    We now choose $x_2 = P_{12}x_1/P_{11}$, which leads to $M_{11}(x) = -2(P_{11}ax_1^2+P_{21})$. Therefore, there exists a sufficiently large $x_1$ such that $M_{11}(x)<0$, which is a contradiction. We conclude that if $P$ is constant, $M(x)$ cannot be positive definite for all nonzero $x$ and all systems in the set $\Sigma$.
    
    Next, we apply Theorem \ref{theorem data driven} for $L\in \mathbb{R}^{1 \times 2}[x]$ and $P\in \mathbb{R}^{2 \times 2}[x_1]$. We define $H(x)$ as in \eqref{Hx}. Let $x(0) = \begin{bmatrix}
		-0.5 & 0.5
    \end{bmatrix}^{\top}$ and $u(t)= 3\sin(2t)-5\cos(t)$ for $t \in [0,10]$. We collect $T=4$ data samples from the system at the time instants $t_0 = 3.10$, $t_1 =~3.18$, $t_2 = 3.26$ and $t_3 = 3.34$. During the experiment, the noise samples are drawn independently and uniformly at random from the ball $\{r \in \mathbb{R}^2 \mid \|r\| \leq \omega \}$, where $\omega = 0.02$. Define $\Phi_{11} = \omega^2 T $. The data matrices
    $$
    	\begin{aligned}
    		\dot{\mathcal{X}} &= \begin{bmatrix}
    			-3.6951 &  -4.6837 &  -5.3252  & -5.6333\\
                    -14.9236 & -10.1249 &  -5.7669  & -2.5443
    		\end{bmatrix},\\
    		\mathcal{X} &= \begin{bmatrix}
    			2.3862  &  2.0483  &  1.6453  &  1.2050\\
                    -3.6915 &  -4.6933 &  -5.3216  & -5.6447\\
    		\end{bmatrix},\\
    		\mathcal{U} &= \operatorname{diag}(4.7464, 5.2265, 5.6688, 6.0614).
            \end{aligned}
    $$
    Furthermore, we have
    $$
    \begin{aligned}
        \mathcal{F} &= \begin{bmatrix}
    			2.3862  &  2.0483  &  1.6453  &  1.2050\\
                    -3.6915 &  -4.6933 &  -5.3216  & -5.6447\\
                    13.5877  &  8.5933 &   4.4542  &  1.7496
    		\end{bmatrix},\\
        \mathcal{GU} &= \begin{bmatrix}
    			4.7464  &  5.2265  &  5.6688  &  6.0614
    		\end{bmatrix}.
    \end{aligned}		
    $$
    After choosing $\epsilon_1 = 0.1$, $\epsilon_2 = 0.01$ and $\epsilon_3(x) = 2 + 2x_1^2$, conditions \ref{theorem model based 1} and \ref{theorem model based 2} of Theorem \ref{theorem model based} hold. We set the maximum degrees of the entries of $P$ and $L$ to 4 and 6, respectively. Define $R$ as in \eqref{Rx}. We formulate an SOS programming imposing condition \ref{theorem model based 3} of Theorem \ref{theorem model based} and the SOS constraint \eqref{theorem data driven sos}. By solving this for $P$ and $L$, we obtain 
    $$
    \begin{aligned}
        P(x_1) =& \begin{bmatrix}
            0.6067 & -0.2700-0.1455x_1^2\\
            -0.2700-0.1455x_1^2 & 2.1736 + 0.1614x_1^4
        \end{bmatrix},\\
        L(x) =& \begin{bmatrix}
        -0.9228 & -4.0530-0.8888x_2^2 -1.5962x_1^6
    \end{bmatrix}.
    \end{aligned}
    $$
    With $P(x_1)$ and $L(x)$ in place, we calculate $V(x)$ and $K(x)$, as defined in \eqref{ex1 Vx} and \eqref{ex1 Kx}, respectively, where
    $$
    \begin{aligned}
        &\begin{aligned}
            \eta(x) = &\ 2.1736 x_1^2+0.5400x_1x_2+0.6067x_2^2+0.2910x_1^3x_2 \\
             &+ 0.1614x_1^6,
        \end{aligned}\\
        &\begin{aligned}
            \xi(x) = & -3.1001 x_1 - 2.7081x_2 - 0.5897x_1^3 - 0.1343x_1^2x_2\\
            &-0.2400 x_1x_2^2 - 0.5392x_2^3 - 0.1489x_1^5 - 0.1293x_1^3x_2^2 \\
            &- 0.4310x_1^7 - 0.9684x_1^6x_2 - 0.2322x_1^9,
        \end{aligned}\\
        &\det{P(x_1)} = 1.2458-0.0788x_1^2+0.0768 x_1^4.
    \end{aligned}
    $$
    This verifies global asymptotic stability of the closed-loop system with respect to the system \eqref{sys ex3} and the obtained controller $K$. The phase portrait of the closed-loop system and the level sets of $V(x)$ are illustrated in Fig.~\ref{fig 3}.
\end{example}
\begin{example} \label{ex 4 flexjoint}
    Inspired by the so-called hardening spring \cite[Sec~1.2.3]{khalil2002nonlinear}, we consider the following model of a mass-spring system, as depicted in Fig.~\ref{figmassspring},
    $$
    \begin{aligned}
        m_1\ddot{q}(t)\! +\! c_1\dot{q}(t) \!+\! k_1(q(t) \!- \!\theta(t))\! + \!k_2(q(t) \!- \!\theta(t))^3\! &=\! 0,\\
        m_2\ddot{\theta}(t)\! +\! c_2 \dot{\theta}(t)\! +\!  k_1(\theta(t)\! -\! q(t))\! + \!k_2(\theta(t)\! -\! q(t))^3\! &=\! \psi(t),
    \end{aligned}
    $$
    where $q(t),\theta(t),\dot{q}(t), \dot{\theta}(t)\in \mathbb{R}$. We denote the horizontal displacement of mass 1 and 2 by $q(t)$ and $\theta(t)$, respectively. The force applied to mass 2 is given by $\psi(t)$. Moreover, $m_1$ and $c_1$ represent the mass and friction coefficient of mass 1, while $m_2$ and $c_2$ denote those of mass 2. The coefficients of the hardening spring are denoted by $k_1$ and $k_2$. In this example, we let the true parameters take $m_1 = 2\ \mathrm{kg}$, $m_2 = 1\ \mathrm{kg}$, $k_1 = 1\ \mathrm{N / m}$, $k_2 = 1\ \mathrm{N / m^3}$ and $c_1 = c_2 = 0.1\ \mathrm{N \cdot s / m}$. The units of $q$, $\dot{q}$, $\theta$, $\dot{\theta}$, $\psi$ and $t$ are $\mathrm{m}$, $\mathrm{m}$, $\mathrm{m/s}$, $\mathrm{m/s}$, $\mathrm{N}$ and $\mathrm{s}$, respectively. We consider $\psi$ as the input and $q$, $q-\theta$, $\dot{q}$ and $\dot{\theta}$ as the state variables. We aim to design a controller to stabilize the equilibrium point $q = 0$, $q-\theta = 0$, $\dot{q} = 0$ and $\dot{\theta} = 0$. After defining $\zeta := \begin{bmatrix}
		q & q -\theta & \dot{q} &\dot{\theta}
	\end{bmatrix}^{\top}$ and $u := \psi$, the state-space equations are
    $$
    \begin{aligned}
        \dot{\zeta}_1 &= \zeta_3,\\
        \dot{\zeta}_2 &= \zeta_3 - \zeta_4,\\
        \dot{\zeta}_3 &= - 0.05\zeta_3 - 0.5\zeta_2 - 0.5 \zeta_2^3,\\
        \dot{\zeta}_4 &= - 0.1\zeta_4 + \zeta_2 +  \zeta_2^3 +  u.
    \end{aligned}
    $$
    After taking 
    $$
        F(\zeta) = \begin{bmatrix}
		\zeta_2 & \zeta_3 & \zeta_4 & \zeta_2^3
	\end{bmatrix}^{\top} \  \text{and} \  G(\zeta) = 1,
    $$
    we have
    $$
    \begin{aligned}
        A_{s1} &= \begin{bmatrix}
		0 & 1 & 0 & 0\\
		0 & 1 & -1 & 0\\
            -0.5 & -0.05 & 0 & -0.5\end{bmatrix},\\
    A_{s2} &= \begin{bmatrix}
		1 & 0 & -0.1 & 1
	\end{bmatrix},
    \end{aligned}
    $$
    and $B_{s2} = 1$. We assume that all zero entries of $A_{s1}$ and $A_{s2}$ are known. Considering the physical meanings of the state variables, it is also natural to assume that the $(1,2)$, $(2,2)$ and $(2,3)$-entries of $A_{s1}$ are known, while the remaining nonzero entries are unknown. As such, in this example, the index set $\hat{\alpha} = \{9,10,12,13,15,16,17\}$. 
    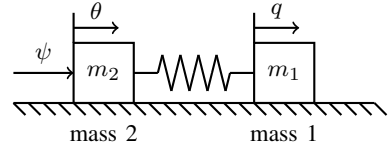
\begin{figure}[!t]
        \centering
        \begin{tikzpicture}[scale=0.8]
            \draw[thick] (-1,0) rectangle (-2,1); 
            \node at (-1.5,-0.5) {mass 2};
            \node at (-1.5,0.5) {$m_2$};
            \draw[thick] (1,0) rectangle (2,1);
            \node at (1.5,-0.5) {mass 1};
            \node at (1.5,0.5) {$m_1$};
            \draw[thick] (-3,0) -- (3,0);
            \foreach \x in {-3, -2.75, ..., 3} {
                \draw[thick] (\x,0) -- (\x+0.2,-0.2);
            }
            \draw[thick] (-1,0.5) -- (-0.6,0.5);
            \foreach \x in {-0.6, -0.3, 0, 0.3} {
                \draw[thick] (\x,0.5) -- (\x+0.075,0.8);
                \draw[thick] (\x+0.075,0.8) -- (\x+0.225,0.2);
                \draw[thick] (\x+0.225,0.2) -- (\x+0.3,0.5);
            }
            \draw[thick] (0.6,0.5) -- (1,0.5);
    
            \draw[thick] (-2,1) -- (-2,1.5);
            \draw[->, thick, black]  (-2,1.25) -- (-1.25,1.25) node[midway, above] {$\theta$};
            \draw[thick] (1,1) -- (1,1.5);
            \draw[->, thick, black]  (1,1.25) -- (1.75,1.25) node[midway, above] {$q$};
            \draw[->, thick, black]  (-3,0.5) -- (-2,0.5) node[midway, above] {$\psi$};
        \end{tikzpicture}
        \caption{The mass-spring system with a hardening spring.}
        \label{figmassspring}
    \end{figure}
    
    Let $\zeta(0) = \begin{bmatrix}
		-1 & 1 & 0 & 0
    \end{bmatrix}^{\top}$ and $u(t)= 3\sin(2t)+2\cos(t)$ for $t \in [0,5]$. We collect $T = 5$ data samples from the system at the time instants $t_0 = 2$, $t_1 = 2.07$, $t_2 = 2.14$, $t_3 = 2.21$ and $t_4 = 2.28$. During the experiment, the noise samples are drawn independently and uniformly at random from the ball $\{r \in \mathbb{R}^4 \mid \|r\| \leq \omega \}$, where $\omega = 0.01$. Define $\Phi_{11} = \omega^2 T $. The data matrices are
    $$
	\begin{aligned}
        \dot{\mathcal{X}}\! &=\! \begin{bmatrix}
		      3.1283  &\!  3.0981 &\!   3.0274  &\!  2.8962  &\!  2.6564\\
                5.5033  &\!  5.6383  &\!  5.6946  &\!  5.5679  &\!  5.1572\\
                -0.3174 &\!  -0.6682 &\!  -1.3468 &\!  -2.5376 &\!  -4.3088\\
                -2.5351 &\!  -2.1930 &\!  -1.1381 &\!   0.9877 &\!   4.3452
	\end{bmatrix},\\
        \mathcal{X}\! &=\! \begin{bmatrix}
                0.7683  &\!  0.9862 &\!   1.2006 &\!   1.4083 &\!   1.6032\\
                0.3046  &\!  0.6949 &\!   1.0923 &\!   1.4880 &\!   1.8657\\
                3.1269  &\!  3.0936 &\!   3.0254 &\!   2.8928  &\!  2.6564\\
               -2.3780  &\! -2.5465 &\!  -2.6684 &\!  -2.6809 &\!  -2.5014\\
		\end{bmatrix},\\
        \mathcal{U} &= \operatorname{diag}(-3.1027, -3.4793, -3.8018, -4.0658, -4.2677).
    \end{aligned}
    $$
    Moreover, we have
    $$
	\begin{aligned}
	&\mathcal{F}\! =\! \begin{bmatrix}
		0.3046  &\!  0.6949 &\!   1.0923 &\!   1.4880 &\!   1.8657\\
            3.1269  &\!  3.0936 &\!   3.0254 &\!   2.8928  &\!  2.6564\\
            -2.3780  &\! -2.5465 &\!  -2.6684 &\!  -2.6809 &\!  -2.5014\\
            0.0283  &\!  0.3356  &\!  1.3033   &\! 3.2949  &\!  6.4945
	\end{bmatrix},\\
	&\mathcal{GU} = \begin{bmatrix}
		-3.1027  &\! -3.4793  &\! -3.8018 &\!  -4.0658 &\!  -4.2677
	\end{bmatrix}.
	\end{aligned}
    $$
    Then, we choose $Z(\zeta) = \zeta$ and
    $$
    H(\zeta) = \begin{bmatrix}
        0 & 1 & 0 & 0\\
        0 & 0 & 1 & 0\\
        0 & 0 & 0 & 1\\
        0 & \zeta_2^2 & 0 & 0
    \end{bmatrix}.
    $$
    After choosing $\epsilon_1 = 0.1$, $\epsilon_2 = 0.01$ and $\epsilon_3(x) = 2 + 2\zeta_1^2 + 2\zeta_2^2 + 2\zeta_3^2$, conditions \ref{theorem model based 1} and \ref{theorem model based 2} of Theorem \ref{theorem model based} hold. We set the maximum degree of the entries of $P\in \mathbb{R}^{4 \times 4}[\zeta_2]$ and $L\in \mathbb{R}^{1 \times 4}[\zeta]$ to 4 and 6, respectively. Define $R$ as in \eqref{Rx}. We formulate an SOS programming imposing condition \ref{theorem model based 3} of Theorem \ref{theorem model based} and the SOS constraint \eqref{theorem data driven sos alpha}, and we solve this for $P$ and $L$. We obtain 
    \begin{equation*}
    \begin{aligned}
    P(\zeta_2) &= \begin{bmatrix}
        6.8942 & 0 & -0.9845 & -0.7844\\
        0 & 0.4613 & 0.2756 & P_{24}(\zeta_2)\\
        -0.9845 & 0.2756 & 0.6371 & P_{34}(\zeta_2)\\
        -0.7844 & P_{42}(\zeta_2) & P_{34}(\zeta_2) & P_{44}(\zeta_2)
    \end{bmatrix},\\
    L(\zeta) &= \begin{bmatrix}
        -0.1888 & 0.1709 & -0.0788 & L_4(\zeta)
    \end{bmatrix},
    \end{aligned}
    \end{equation*}
    where 
    $$
    \begin{aligned}
        P_{24}(\zeta_2) &= P_{42}(\zeta_2) = 0.4522 + 0.1062\zeta_2^2,\\
        P_{34}(\zeta_2) &= P_{43}(\zeta_2) = 0.4036 - 0.0540\zeta_2^2,\\
        P_{44}(\zeta_2) &= 1.3096 + 0.2286\zeta_2^4,\\
        L_4(\zeta) &= -3.2414 - 1.1473 \zeta_2^2\zeta_3^2 - 1.1505 \zeta_2^2\zeta_4^2 - 1.7049\zeta_2^6.
    \end{aligned}
    $$
    With $P(\zeta_2)$ and $L(\zeta)$ in place, we can calculate the Lyapunov function candidate $V(\zeta) = \zeta^{\top} P^{-1}(\zeta_2)\zeta$ and the controller $K(\zeta) = L(\zeta)P^{-1}(\zeta_2)\zeta$. In Fig.~\ref{fig 4}, we present 4 sets of trajectories of the true closed-loop system. Each entry of the initial state $\zeta(0)$ is chosen independently and uniformly at random from the interval $[-2, 2]$. These figures further illustrate that the origin of the true closed-loop system is asymptotically stable.
    \begin{figure}[!t]
	\begin{center}
        \includegraphics[clip,trim=55 10 60 35,width=0.90\columnwidth]{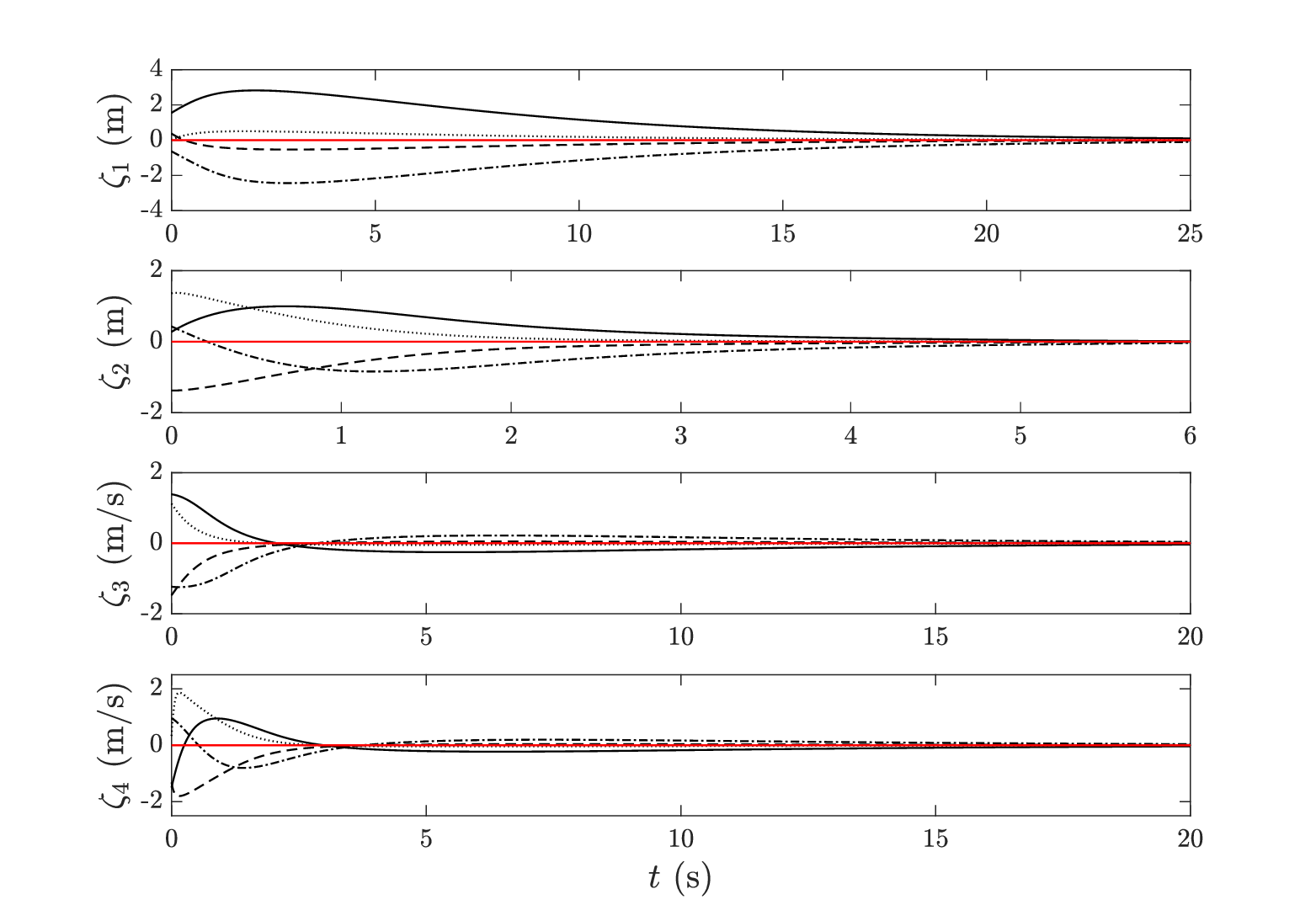} 
        \caption{Trajectories of the true closed-loop system for four different initial states. The zero state in each subplot is depicted by a red line.}
        \label{fig 4} 
        \end{center}
    \end{figure}
\end{example}

\section{Conclusion} \label{section conclusion}
In this study, we have considered a class of polynomial systems in which some state derivatives are not directly affected by the inputs. We have proposed novel stabilization methods in both model-based and data-driven settings. In comparison to the existing approach in \cite[Thm.~6]{Originalmiddle}, our model-based method can be applied to Lyapunov functions that are not radially unbounded, while still guaranteeing global asymptotic stability of the closed-loop system. This result is then extended to the data-driven setting through a specialized S-lemma \cite{paper1density}. In addition to handling noise, our data-driven approach can be adapted to incorporate prior knowledge of the system parameters. 

For future research, it would be of interest to explore the potential benefits of a broader class of Lyapunov function candidates and controllers, in addition to improved feasibility. One possible direction is to include performance specifications, such as constraints on input energy. Finally, another open problem is applying our methods to real-world systems, such as a flexible-joint robot.

\section*{References}
\vspace{-2 em}
\bibliographystyle{IEEEtran}
\bibliography{ReferencePapers}

\end{document}